\numberwithin{equation}{section}
\newtheorem{lemma}{Lemma}
\newtheorem{corollary}{Corollary}
\newtheorem{theorem}{Theorem}
\def\eqn {\begin{equation}}
\def\eeqn {\end{equation}}
\def\real{{\mathbb R}}
\def\R{\real}
\def\ep{\varepsilon}
\def\lb{\lambda}
\def\C{\mathcal C}
\def\J{\mathcal J}
\def\K{\mathcal K}
\def\L{\mathcal L}
\def\S{\mathcal S}
\def\L{\mathcal L}
\def\mcR{\mathcal R}
\newcommand{\Om}{\Omega}
\newcommand{\be}{\begin{equation}}
\newcommand{\ee}{\end{equation}}
\newcommand{\mcs}{{\mathcal S}}
\newcommand{\mcb}{{\mathcal B}}
\newcommand{\mcl}{{\mathcal L}}
\newcommand{\mcr}{\mathcal{R}}
\newcommand{\bdr}{\mathbb{R}}
\newcommand{\mcc}{\mathcal{C}}
\newcommand{\N}{\mathbb N}
\newcommand{\bese}{\begin{subequations}}
\newcommand{\ese}{\end{subequations}}
\begin{document}
\title[Large-amplitude steady downstream water waves]{Large-amplitude steady downstream water waves}
\author[A. Constantin, W. Strauss and E. V\u{a}rv\u{a}ruc\u{a}]{Adrian Constantin, Walter Strauss and
Eugen V\u{a}rv\u{a}ruc\u{a}}

\address{Faculty of Mathematics, University of Vienna, Oskar-Morgenstern-Platz 1, 1090 Vienna, Austria}
\email{adrian.constantin@univie.ac.at}
\address{Brown University, Department of Mathematics and Lefschetz Center for
Dynamical Systems, Box 1917, Providence, RI 02912, USA}
\email{wstrauss@math.brown.edu}%
\address{Faculty of Mathematics, ``Al. I. Cuza" University, Bdul.\ Carol I, nr.\ 11, 700506 Ia\c{s}i, Romania}
\email{eugen.varvaruca@uaic.ro}%

\maketitle

\begin{abstract}
We study wave-current interactions in two-dimensional water flows of constant vorticity over a flat bed.
For large-amplitude periodic traveling waves that propagate at the water surface in the same direction as the underlying current
(downstream waves), we prove explicit uniform bounds for their amplitude.
In particular, our estimates show that the maximum amplitude of the waves becomes vanishingly small as the vorticity increases without limit.
We also prove that the downstream waves on a global bifurcating branch are never overhanging, and that their mass flux and Bernoulli constant are uniformly bounded.
\end{abstract}

\smallskip
\noindent  {\footnotesize \textsc{Keywords}: water waves, vorticity, Dirichlet-Neumann operator, Hilbert transform}.

\noindent {\footnotesize \textsc{AMS Subject Classifications (2010)}: 76B15, 42B37, 35B50.}

\section{Introduction}

Wave-current interactions are ubiquitous since typically a non-trivial mean flow, a current,  underlies surface water waves.
Sheared underlying currents are indicated by the presence of non-trivial vorticity.
The primary sources of currents are winds of long duration \cite{Jon}.
In particular, in shallow regions with nearly flat beds, such as on the continental shelves, systematic studies of the velocity profiles of
wind-generated currents have shown that they are accurately described as flows with constant vorticity \cite{Ew}.

In this paper we consider two-dimensional inviscid steady waves with constant vorticity.
Inviscid theory is the usual framework for
studying water waves that are not close to  breaking because the most significant effects of viscosity in
the open sea produce wave-amplitude reduction, as well as diffusion of the deeper motions, over time scales and
length scales (wave periods and wavelengths) that are far larger than those of the dynamical processes at the surface \cite{DP}.
The choice of constant vorticity is not merely a mathematical simplification. Indeed, when waves propagate at the surface of
water over a nearly flat bed, for waves that are long compared to the mean water depth,
it is the existence of a non-zero mean
vorticity that is more important than its specific distribution (see the discussion in \cite{DP}).
Moreover, in contrast to the substantial research literature on steady three-dimensional irrotational waves,
it turns out that flows of constant vorticity
are inherently two-dimensional (see the discussion in \cite{C, W2}), with the vorticity correlated with the direction of wave-propagation.

The presence of a non-uniform underlying current is experimentally known to drastically alter the behavior of surface waves,
when compared with irrotational waves which
travel from their region of generation through water that is either quiescent or in uniform flow. The nature of a two-dimensional
wave-current interaction notably depends on the directionality of the vertical shear
of the current profile in relation to the direction of wave propagation.
Here we distinguish between {\it favorable} currents, which
are sheared in the same direction as that of the wave propagation ({\it downstream} waves),
and {\it adverse} currents, which are sheared
in the direction opposite to that of wave propagation ({\it upstream} waves).
Field data, laboratory experiments and numerical
simulations (see \cite{CKS1, CKS2, KS1, KS2, Mo, Swan, GPT, tk, V})
lead to the conjecture that an adverse current will shorten
the wavelength, increasing the wave height and the wave steepness, to the extent that bulbous waves appear with
overhanging wave profiles (see \cite{DP, SS}).
On the other hand, for waves propagating downstream, the favorable current
appears to lengthen the wavelength and flatten the wave out, so that its slopes are less steep.

At the present time the state-of-the-art to derive
{\it a priori} bounds on surface water waves of large amplitude lags behind the experimental and numerical developments,
and is to a large extent confined to irrotational deep-water flows.
In the latter setting  \cite{BT, Tol} it was proven that an increase in wave height results in
wave profiles that are not symmetric about their mean level, unlike the small-amplitude
sinusoidal waves familiar from linear theory.
Instead, the crests become higher and the troughs flatter to
the extent that, for a given wavelength, there exists a limiting
wave, the so-called `wave of greatest height' or 'wave of extreme form', which is on the verge of breaking.
This extreme wave is distinguished
by the fact that in the reference frame moving with the wave the water comes to rest at its peaked crest
with included angle $2\pi/3$, as conjectured by Stokes in 1880 and finally proved about a century later in \cite{AFT} (see also \cite{VW1}).
In general, at the wave crest of a traveling surface wave
in irrotational flow with no underlying current, the fluid particles are moving forward at a speed
less than the wave speed (see \cite{C-IM, CSp}).
As the wave profile approaches the wave of greatest height the horizontal particle velocity at the crest
approaches the wave speed until these two velocities become equal in the limiting wave.
A further increase in wave height will cause the fluid particles to overtake the wave itself, and
breaking will ensue (see \cite{PW}).

On the other hand, while it has long been known (see \cite{Mi}) that formal expansions indicate that a uniform vorticity distribution may
accommodate such limiting wave forms and does not alter considerably the shape of their crest (namely, a symmetric peak with
an included angle of $2\pi/3$, as in the case without vorticity), progress towards rigorous results has been much more difficult (see
\cite{V1, VW2}). Apart from their interest in their own right, {\it a priori} bounds on smooth waves of
large amplitude are a necessary prerequisite for the existence of rotational waves of extreme form.

Let us briefly discuss the present state of rigorous mathematical investigations of rotational waves of large amplitude
that are not perturbative, taking advantage instead of structural properties of the equations.
For flows with general vorticity but without stagnation points the existence of surface waves of large amplitude,
with profiles that can be represented as graphs of smooth functions, was established in \cite{CS}.
For flows of constant vorticity an alternative geometric approach \cite{CSV} accommodates stagnation points as well
as the possibility of overhanging profiles.
A number of qualitative studies of rotational waves of large amplitude are also available.
They include symmetry results for wave profiles that are monotone between successive crests and troughs \cite{CEW, CE0},
results about the location of the point of maximal horizontal fluid velocity \cite{CSv, V-SIAM}, and
bounds on the maximum slope of the waves \cite{SW}.
Although such non-perturbative results are not restricted to waves of small amplitude,
their validity has typically required some extraneous information,
such as the absence of stagnation points, the assumption that the wave profile is a graph or the assumption that
some specific bounds (on the velocity or on the amplitude) hold.
Thus the lack of {\it a priori} bounds for flows with vorticity has impeded
substantial progress towards a comprehensive theory for waves of large amplitude.

The main aim of this paper is to derive a uniform bound of the amplitude of downstream waves,
the existence of which has recently been proved in \cite{CSV}.  The approach in \cite{CSV} relies on a
formulation, first introduced there, of the governing equations for steady water waves with constant vorticity
as a one-dimensional nonlinear pseudo-differential equation \eqref{vara}
with a scalar constraint \eqref{meana}. This formulation permits the presence of stagnation points in the flow
as well as overhanging wave profiles.  While from a mathematical point of view it appears at first sight
to be dismayingly complicated, it has a variational structure that warrants a profound analysis, enabling us
to establish, by means of the analytic theory of global bifurcation in a suitable function space, the existence
of two solution curves that contain waves of large amplitude \cite{CSV}, one of this curves consisting of upstream waves,
and the other of downstream waves. While the results in \cite{CSV} left open several possibilities concerning the properties of the waves
corresponding to points on this global curve as the parameter along the curve tends to $\infty$, such as,
for example, those that waves could become overhanging, or that either the parameters in the
problem or suitable norms of the solution could increase without bound, these issues are settled
in the present paper in the case of {\it downstream waves}. A fine analysis of the system (\ref{sys})
that uncovers some unexpected structures leads to the following main result, entirely consistent with
the numerical computations in \cite{KS1,DH}.

\begin{theorem} \label{mainthm}
Consider the waves with constant favorable vorticity $\Upsilon>0$ lying on the bifurcation curve
which is parametrized by $s\in (0,\infty)$.  Then

(i) The waves are not overhanging.

(ii) The wave amplitude (elevation difference between crest and trough) is uniformly bounded along the
bifurcation curve, with an explicit bound depending only on the vorticity and the wave period.

(iii) As a function of the vorticity, the amplitudes of all the waves tend to zero uniformly as the vorticity becomes infinite:
\[\lim\limits_{\Upsilon\to\infty} \sup_{s\in [0,\infty)}\left\{\max\limits_{{\mathcal S}(s)} Y
-\min \limits_{{\mathcal S}(s)} Y \right\}\to 0.\]

(iv) For any $\Upsilon>0$, as one moves along the bifurcation curve, the waves approach their
maximum possible amplitude ${Q(s)}/{2g}$:
$$\quad \lim_{s\to +\infty}  \left\{\max_{{\mathcal S}(s)} Y - \frac {Q(s)}{2g}  \right\} = 0, $$
while $m(s)$ and $Q(s)$ remain uniformly bounded as $s\to\infty$,
where $m(s)$ is the wave flux, $Q(s)$ is the total head and ${\mathcal S}(s)$ is the wave profile.

\end{theorem}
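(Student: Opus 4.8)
The plan is to carry out the whole argument in the conformal reformulation behind \eqref{vara}--\eqref{meana}: the fluid domain, in the frame moving with the wave, is mapped onto a fixed strip, so that the unknowns reduce to the periodic trace $Y$ along $\mathcal S(s)$ of the height, together with its harmonic conjugate, coupled through a Dirichlet--Neumann (equivalently, periodic Hilbert-transform) operator. Two structural facts will be used throughout. First, because the vorticity equals the constant $\Upsilon$, the stream function $\psi$ differs from a harmonic function by an explicit quadratic in $Y$; consequently the free-surface dynamic condition becomes, on the top of the strip, a relation between the boundary data of a harmonic function and its normal derivative in which $\Upsilon$ enters as a coefficient. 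Second, Bernoulli's law on $\mathcal S(s)$ reads $|\nabla\psi|^2 = Q(s) - 2gY$ there, so that $\max_{\mathcal S(s)} Y \le Q(s)/2g$, with equality exactly when the flow develops a stagnation point at the surface --- and this last condition is one of the constraints defining the open set on which the global branch of \cite{CSV} lives.

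For (i) I would prove that the horizontal coordinate along $\mathcal S(s)$ is a strictly increasing function of the conformal parameter, which is equivalent to the non-overhanging property. In the conformal variables this reduces to a bound, by $\pi/2$, on the slope angle of $\mathcal S(s)$, i.e. to showing that the fluid velocity at the surface never acquires the direction that would produce a vertical tangent. I would obtain this from a maximum-principle argument applied to the harmonic function $\psi_y$ (the horizontal velocity relative to the wave), whose boundary data on the bed is understood from the laminar flow from which the branch bifurcates and whose sign on the surface is propagated using $\Upsilon > 0$; the favorable sign of the vorticity is precisely what makes the surface velocity keep a definite sign and so forbids the profile from turning back. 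Part (iii) will then be immediate from the explicit bound in (ii), so the real work is in (ii) and (iv).

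For (ii), once (i) is in hand $\mathcal S(s)$ is a graph $Y = \eta(x)$ over one period of length $L$, and $2g\,(\max\eta - \min\eta)$ is exactly the oscillation of $|\nabla\psi|^2$ along the surface. The heart of the matter is therefore a bound on this oscillation that depends only on $\Upsilon$ and $L$ --- in particular not passing through $Q(s)$, which is not controlled by $\Upsilon$ and $L$ alone. Here I expect the vorticity to act as a damping coefficient: substituting the harmonic-plus-quadratic splitting into the dynamic boundary condition and estimating the Dirichlet--Neumann operator on the strip should produce an inequality of the shape $\mathrm{osc}(\eta) \le C\,\mathrm{osc}(\eta)/\Upsilon^2 + (\text{lower order})$ alongside a competing bound of size $O(L)$ coming from periodicity alone; combining the two yields an explicit bound in terms of $\Upsilon$ and $L$ that vanishes as $\Upsilon \to \infty$, which is (iii). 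Running analogous estimates on the scalar constraint \eqref{meana} and on a flow-force identity for \eqref{sys} --- using Bernoulli, the maximum principle for the harmonic pieces, and the amplitude bound just obtained --- should in parallel give the uniform bounds on $m(s)$ and $Q(s)$ asserted in (iv).

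Finally, for the limiting statement in (iv) I would invoke the global-bifurcation alternative from \cite{CSV}: as $s \to \infty$ either a suitable norm of the solution, together with $(m(s), Q(s))$, is unbounded, or the solution approaches the boundary of the admissible open set. The uniform bounds on $m(s)$ and $Q(s)$ obtained above, combined with elliptic regularity to control the remaining norms, rule out the first alternative, and part (i) shows the branch never reaches the part of the boundary corresponding to an incipient overhanging profile; hence the solution must approach the only remaining part of the boundary, where a stagnation point forms at the surface, i.e. $Q(s) - 2g\max_{\mathcal S(s)} Y \to 0$, which is the assertion. The main obstacle, and the point at which the special structure of the system has to be exploited, is exactly the uniform a priori control of $Q(s)$, $m(s)$ and of the oscillation of $Y$: none of these is soft, and squeezing out the correct $\Upsilon$-dependence needed for (iii) is the delicate step.
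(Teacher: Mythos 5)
Your overall architecture matches the paper's (conformal/Hilbert-transform formulation, maximum principles for the non-overhanging property, an oscillation estimate with $\Upsilon$-dependence, then the global alternative to force stagnation), and your logic for part (iv) is essentially the paper's: assume $Q-2g\max v\ge\delta$ along a subsequence, bootstrap regularity to contradict the blow-up alternative. But there are two genuine gaps where the sketch, as written, does not close.

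For (i), the sign of the relative horizontal velocity $\psi_Y$ on the surface does not by itself forbid a vertical tangent. In the conformal parametrization one has $\psi_Y(u(x),v(x))=u'(x)\{\zeta_y+\Upsilon VV_y\}/((u')^2+(v')^2)$, so at a putative vertical-tangent point ($u'(x_0)=0$) the surface value of $\psi_Y$ is exactly zero --- perfectly consistent with $\psi_Y\le 0$ on $\mathcal S$ and $\psi_Y<0$ inside. The paper needs a second, independent maximum-principle argument: it introduces the Spielvogel-type function $R=\tfrac12|\nabla\psi|^2+gY-\tfrac12 Q-\Upsilon\psi$ (the negative pressure up to a constant), shows $R$ satisfies an elliptic equation whose right-hand side is positive precisely because $\Upsilon\ge0$ and $\psi_Y<0$, applies the Hopf lemma at the vertical-tangent point to get $R_X=-\psi_X\psi_{YY}\neq0$ there, and then contradicts this by twice differentiating $\psi(u(x),v(x))=0$ at $x_0$ (where $u'(x_0)=u''(x_0)=0$) to force $\psi_{YY}=0$. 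Without this second layer your continuation argument for the graph property does not terminate in a contradiction.

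For (ii)--(iii), which the paper itself identifies as the delicate core, your sketch replaces the actual mechanism with a heuristic whose two pillars are both problematic. First, there is no bound of size $O(L)$ on the oscillation ``coming from periodicity alone'': a priori the wave could be tall and thin, and obtaining any $\Upsilon$-independent bound is exactly the hard part. Second, the inequality actually proved is not of the absorbable form $\mathrm{osc}\le C\,\mathrm{osc}/\Upsilon^2+\dots$; it is the quadratic inequality $A\{f(\pi)-f(0)\}^2+12\{f(\pi)-f(0)\}\le 24\pi/\beta_{kh}(\pi/2)$ with $A=\Upsilon^2/(kg)$, obtained by (a) subtracting the reformulated surface equation \eqref{vara} at the trough from its value at the crest, (b) exploiting the sign constraint \eqref{aza0} multiplied by $\Upsilon/g\ge0$ to bound the left side by the oscillation alone, and (c) bounding the right side from below via the integral representations $\mathcal J f(x)=\int(-\beta_{kh}'(s))\{f(x)-f(x-s)\}^2\,ds/4\pi$ and its cubic analogue, together with the monotonicity of the strip kernel $\beta_{kh}$ and the uniform estimate $\beta_d(\pi/2)\ge(\pi-2)/\pi$. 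None of these ingredients --- the crest-minus-trough evaluation, the sign inequality, the positive-kernel representations of the commutator terms, or the uniform kernel bound that makes the final constant depend only on $k$ --- appears in your plan, and without them the claimed explicit bound and its $\Upsilon\to\infty$ decay are not established.
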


In Section 2 of the present paper we discuss the governing equations and this new formulation
for both upstream and downstream waves.
In particular, the scalar constraint is conductive to an elegant characterization of the {\it downstream} waves.
In Section 3 we prove that the wave profile of each {\it downstream} wave on the solution curve is a graph;
that is, the waves do not overhang.
In Section 4 we derive the key {\it a priori} bounds for the downstream waves.
These bounds use explicit detailed analysis of the Dirichlet-Neumann operator acting on the nonlinear terms.
They are novel, surprising and extremely delicate.
In Section 5 we discuss the physical interpretation of these mathematical results. Some background material
is collected in the Appendix (Section 6).

As already mentioned, Theorem 1 is sufficient to ensure, by a very slight adaptation of the arguments in \cite{V1}
that let $s\to\infty$ along a subsequence,
the existence of a limiting wave with stagnation points at its crests.
An in-depth study of the properties of this limiting wave remains an important open problem.
Another open problem  is the determination of {\it a priori} bounds
and geometric properties of {\it upstream} waves of large amplitude.

\section{Preliminaries}

We consider the problem of two-dimensional spatially periodic travelling free surface gravity water waves in a flow of constant
vorticity over a flat bed. In a frame of reference moving with the speed of the wave, the fluid is in steady flow and occupies a laterally unbounded region $\Omega$ of the $(X,Y)$-plane, whose boundary is made up of two parts: a lower part consisting of the real axis $\mcb=\{Y=0\}$ and representing the flat impermeable water bed, and an upper part that consists of a curve $\mcs$ representing the free surface between the fluid and the atmosphere (see Fig. 1). The steady flow in $\Omega$  may be
described by means of a {\it stream function} $\psi$, so that
the velocity field is $(\psi_Y,-\psi_X)$, and $\psi$ satisfies the following
equations and boundary conditions:
\begin{subequations}\label{g}
\begin{align}
 \Delta\psi&=\Upsilon \quad\text{in } \Omega,\label{g1}\\
 \psi&=0\quad \text{on } {\mathcal S},\label{g3}\\
 \psi&=-m\quad \text{on } {\mathcal B},\label{g4}\\
 \vert\nabla\psi\vert^{2}+2gY&=Q \quad\text{on }{\mathcal S}.\label{g2}
\end{align}
\end{subequations}
Here $g$ is the gravitational constant of acceleration, the constant $m$ is the relative mass flux, the constant $Q$ is the total head, and the vorticity of the flow $\Delta\psi$ is assumed to take the constant value $\Upsilon $. In addition, both the domain $\Omega$ and the stream function $\psi$ will be assumed to be $L$-periodic in the horizontal direction, for some $L>0$. This is a {\it free-boundary problem}, in which both the fluid domain $\Omega$ and the function $\psi$ satisfying (\ref{g}) need to be found as part of the solution.

A {\it stagnation point}
of the flow is a point where $\nabla\psi=(0,0)$. Stagnation points below the surface occur in the case of flow-reversal, even for
waves that are small perturbations of a flat surface (see \cite{CV}). On the other hand, a stagnation
point on the surface $\mcs$ is the hallmark of the limiting `wave of greatest height', which is on the verge of breaking and whose profile may have a corner singularity at the wave crest, see \cite{V1, VW1, VW2}. Such waves present special features that stand apart
from those of regular waves and, in order to avoid technicalities that are of little relevance for the purposes of this paper, will not be investigated directly. Instead, we look for smooth (regular) solutions of \eqref{g} that satisfy
\begin{equation}\label{nss}
Q> 2gY \quad\text{on }{\mathcal S},
\end{equation}
bearing in mind the possibility that singular waves may potentially arise as limits of such regular waves.

In order to obtain existence results for (\ref{g}), it is usually necessary to consider an equivalent reformulation of the problem over some fixed domain.
In the present setting, for waves of large amplitude the most comprehensive results available are due to \cite{CSV}
and are based on the following alternative formulation of the governing equations \eqref{g}:
\begin{subequations}\label{sys}
\be\label{vara}
\begin{aligned}
\mcc_{kh}\Big( (Q-2gv-\Upsilon ^2 v^2)\,v'\Big) &+ \,\displaystyle(Q-2gv-\Upsilon ^2 v^2)\Big(\frac{1}{k} + \mcc_{kh}(v')\Big)\\
 &- \,\displaystyle\,2\Upsilon \,v \,\Big( \frac{m}{kh} - \frac{\Upsilon }{2kh}\,[v^2] - \Upsilon \,\mcc_{kh}(vv')\Big) \\
& - \,\displaystyle\frac{Q-2\Upsilon  m -2gh}{k} + 2g\,[v\,\mcc_{kh}(v')]=0,
\end{aligned}
\ee
\be
\begin{aligned}\label{meana}
\displaystyle\left[\left( \frac{m}{kh} - \frac{\Upsilon }{2kh}\,[v^2]-\Upsilon \mcc_{kh}(vv') +\Upsilon  v\,\Big(\frac{1}{k}+\mcc_{kh}(v')\Big)\right)^2\right] \\
\qquad -\displaystyle\left[(Q-2gv)\,\left((v')^2
+\Big(\frac{1}{k}+\mcc_{kh}(v')\Big)^2\right)\right]=0.
\end{aligned}
\ee
\end{subequations}
The square bracket $[\cdot ]$ will be used throughout the paper to denote, for a periodic function,
its average over a period. Thus \eqref{meana} is merely a scalar equation.
In (\ref{sys}), $v$ is a suitably smooth $2\pi$-periodic function of a real variable $x$ that represents the
wave elevation in a parametrization of the free surface related to a conformal mapping from a horizontal
strip onto the fluid domain (see Fig. 1).
We denote  $h=[v]$, the average of $v$ over a period, which is a positive constant that
may be called the {\it conformal mean depth} of the fluid domain $\Omega$.
The constant $k>0$ is the wave number corresponding to the wave
period $L=2\pi/k$.  The operator $\C_{kh}$ denotes the periodic
Hilbert transform for a strip of height $kh$ (see the Appendix).
As in \eqref{g},  $\Upsilon $, $m$ and $Q$ are real numbers that represent, respectively, the constant vorticity, the relative mass flux, and the total head.

\begin{figure}[h]
  \centering
  \includegraphics[width=0.87\textwidth]{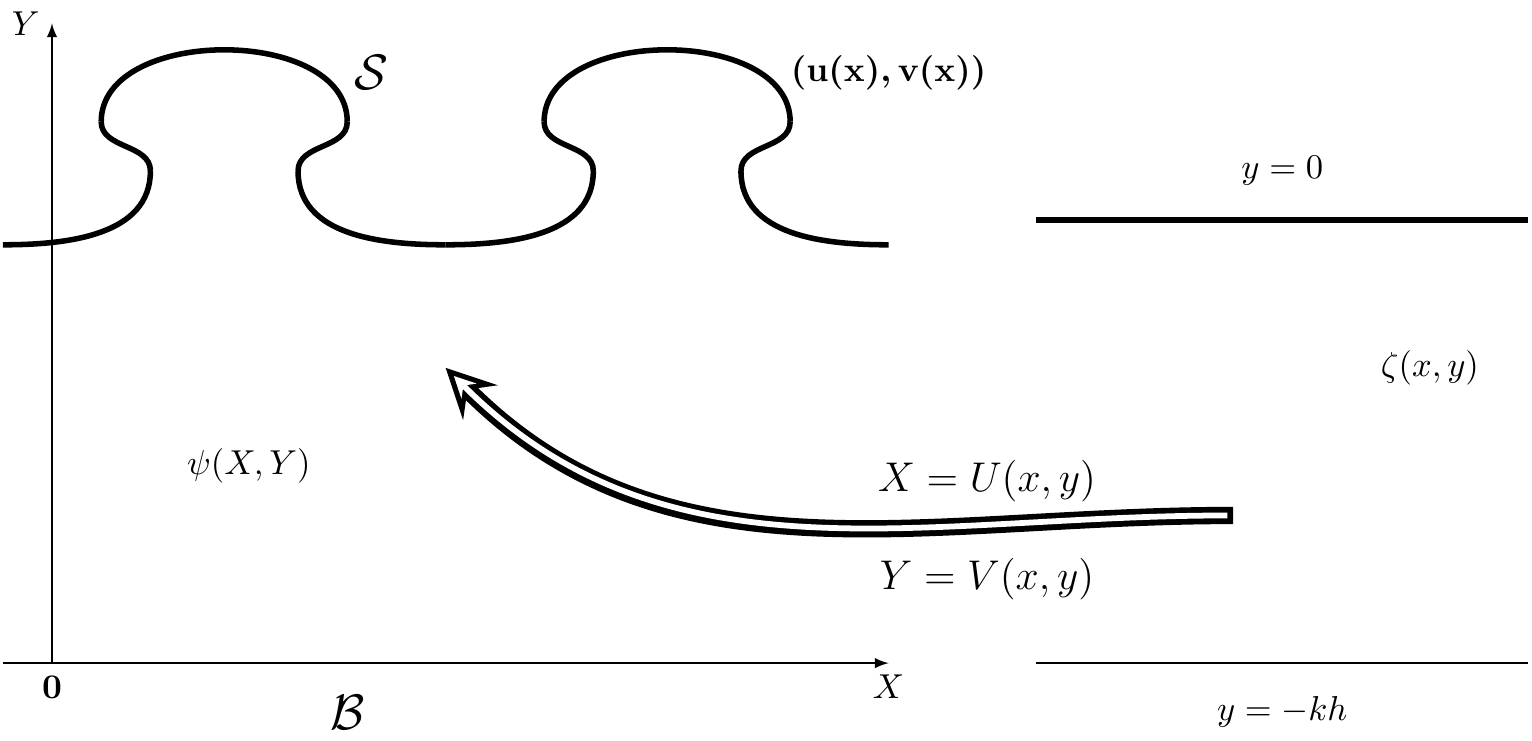}
   \captionsetup{width=.97\linewidth}
  \caption{\footnotesize The conformal parametrisation of the fluid domain: sketch of the horizontal strip ${\mathcal R}_{kh}$ on the right
  and on the left, deptiction of the configuration in a frame moving
  at the wave speed, with the free stationary wave profile parametrised by $(u(x),\,v(x))$ with
  $u(x)=\frac{x}{k}+\big(\mcc_{kh}(v-h)\big)(x)$.}
\end{figure}

Although any (smooth) solution of (\ref{g}) gives rise to a solution of (\ref{sys}), a (smooth) solution of (\ref{sys})
gives rise to a solution of (\ref{g}) if and only if the parametrization $x\mapsto (u(x),\,v(x))$ is regular, that is,
\begin{gather} v(x)>0\qquad\text{for all }x\in\bdr,\label{pos}\\
\text{the mapping $x\mapsto \left(\frac{x}{k}+\big(\mcc_{kh}(v-h)\big)(x),\, v(x)\right)$ is
injective on $\bdr$}\label{m2},\\
\left(v'(x)\right)^2+\left(\frac{1}{k}+\mcc_{kh}(v')(x)\right)^2\neq 0\qquad\text{for all }x\in\bdr\,.    \label{m3}
\end{gather}
If (\ref{pos})-(\ref{m3}) hold for a solution of (\ref{sys}), then a solution $(\Om, \psi)$ of (\ref{g}) can be constructed as described in detail in the Appendix, with the fluid domain $\Om$ being the image of the strip
\[\mcr_{kh}=\{(x,y): x\in \R, -kh<y<0\}\]
through a conformal mapping $U+iV$ obtained easily from $v$. Note also that any solution of (\ref{sys}) also satisfies (see the Appendix)
\begin{equation}\label{add}
\begin{aligned}
\left( \frac{m}{kh} - \frac{\Upsilon }{2kh}[v^2]-\Upsilon \mcc_{kh}(vv') +\Upsilon  v\,\Big(\frac{1}{k}+\mcc_{kh}(v')\Big)\right)^2 \\
\qquad - (Q-2gv)\,\left((v')^2
+\Big(\frac{1}{k}+\mcc_{kh}(v')\Big)^2\right)=0\,.
\end{aligned}
\end{equation}
In view of this, if \eqref{nss} holds, then we see that \eqref{m3} yields
\begin{equation}\label{win}
\frac{m}{kh} - \frac{\Upsilon }{2kh}[v^2]-\Upsilon \mcc_{kh}(vv') +\Upsilon  v\,\Big(\frac{1}{k}+\mcc_{kh}(v')\Big) \neq 0 \quad
\text{on}\quad {\mathbb R}\,.
\end{equation}

The existence of solutions $(m,Q,v)$ of (\ref{sys}) such that $[v]=h$ was studied in \cite{CSV}
in the space $\R\times \R\times C^{2, \alpha}_{2\pi, e}(\bdr)$ ,
where  $$C^{2, \alpha}_{2\pi, e}(\bdr) =\{ f\in C^{2, \alpha}_{2\pi}(\bdr):
\ f(x)=f(-x)\text{ for all }x\in\bdr\},$$
for any fixed H\"{o}lder exponent $\alpha \in (0,1)$. The requirement that $v$ be an even function
reflects the symmetry of the corresponding wave profile about the crest line that corresponds to $x=0$.

Furthermore, seeking wave profiles $\mcs=\{(u(x),v(x)):x\in\R\}$ whose vertical coordinate strictly decreases between
each of its consecutive global maxima and minima, which are unique per principal period, we consider the following properties
of the pair $(m,v)\in\R\times C^{2,\alpha}_{2\pi, e}$:
\begin{gather}
 v(x)>0 \qquad\text{for all } x\in \R,\label{as03}\\
  v'(x)<0\quad\text{for all }x\in \left(0,\pi\right),\label{nmbv}\\\
  v''(0)<0,\,\, v''\left(\pi\right)>0,\label{nodalloc}\\
 0< \frac{x}{k}+ \big(\mcc_{kh}(v-[v])\big)(x)< \frac{\pi}{k}\qquad\text{for all }x\in \left(0,\pi\right),\label{ga1}\\
  \frac{1}{k}+ \big(\mcc_{kh}(v')\big)(0)>0,\quad \frac{1}{k}+ \big(\mcc_{kh}(v')\big)\left(\pi\right)>0,\label{ga2}\\
 \pm \left(\frac{m}{kh} - \frac{\Upsilon }{2kh}[v^2]-\Upsilon \mcc_{kh}(vv') +\Upsilon  v\,\Big(\frac{1}{k}+\mcc_{kh}(v')\Big)\right)> 0
\qquad\text{on } \R\,.\label{aza0}
\end{gather}
 The condition \eqref{aza0} comes from merely \eqref{win}. We define the open sets
\be
\label{v+} {\mathcal V}_{\pm}=\{(m, Q,v)\in \R\times \R\times C^{2, \alpha}_{2\pi, e}(\bdr):
(\ref{as03})-(\ref{aza0}) \text{ hold}\}\,,\ee
where the choice of sign in ${\mathcal V}_{\pm}$ is the same as that in (\ref{aza0}).
We emphasize that the sets ${\mathcal V}_{\pm}$ can
accommodate waves with overhanging profiles since no claim is being made
that the horizontal coordinate of the wave profile is also a strictly monotone function of the parameter $x$.

The constraints \eqref{as03}-\eqref{aza0} ensure, in particular, that the associated free surface $\mcs$ is not flat. Consequently, they exclude the family
of trivial (laminar) solutions of (\ref{sys}) for which $v\equiv h$, and for which $Q$ and $m$ are related by
\be Q=2gh + \Big( \frac{m}{h}+\frac{\Upsilon
h}{2}\Big)^2, \label{zar}\ee
where $m\in\R$ is arbitrary. This family represents a curve
\[{\mathcal K}_{\textnormal{triv}}=\left\{\left(m, 2gh + \Big( \frac{m}{h}+\frac{\Upsilon
h}{2}\Big)^2, h\right): m\in\R\right\}\]
in the space $\R\times \R\times C^{2, \alpha}_{2\pi, e}(\bdr)$.
These trivial solutions
correspond to parallel shear flows in the fluid domain bounded below by the
rigid bed $\mcb$ and above by the free surface $Y=h$, with stream
function
\[\psi(X,Y)=\frac{\Upsilon }{2}Y^2+\left(\frac{m}{h}-\frac{\Upsilon
h}{2}\right)Y-m, \qquad X\in\bdr, 0\leq Y\leq h,\] and velocity field
\begin{equation}\label{lb8}
(\psi_Y,-\psi_X)=\Big(\Upsilon  Y+\frac{m}{h}-\frac{\Upsilon  h}{2}
,0\Big), \qquad X\in\bdr, 0\leq Y\leq h.
\end{equation}

Returning to the general case, it is explained in the Appendix how a solution $(\Omega, \psi)$ of (\ref{g}) can be constructed from
a solution of (\ref{sys}) by means of a conformal map $U+iV$ from $\mcr_{kh}$ onto $\Omega$, an important role being played by
a function $\zeta$ on $\mcr_{kh}$ that satisfies the system \eqref{gc} and is related to $\psi$ by
\be\label{zeta}
\zeta(x,y)=\psi(U(x,y),V(x,y)) + m- \frac{1}{2}\,\Upsilon V^2(x,y)\,,\qquad (x,y) \in \mcr_{kh}.
\ee
It then follows that the fluid velocity at the location $(X,Y)=(U(x,y),\,V(x,y)) \in \Omega$ with $(x,y) \in \mcr_{kh}$, is given by
\be\label{vel}
(\psi_Y,-\psi_X)=\Big(\frac{V_x\zeta_x+V_y\zeta_y}{V_x^2+V_y^2}
+\Upsilon  V,\, \frac{V_x\zeta_y-V_y\zeta_x}{V_x^2+V_y^2}\Big).
\ee

The main existence result for waves of large amplitude, which is based on an application of global real-analytic
bifurcation theory, is as follows (see Theorem 5 in \cite{CSV}).

\begin{theorem}\label{glbp}
Let $h,\,k>0$ and $\Upsilon  \in \bdr$ be given. Set
\be\label{lb7}
m_{\pm}^* = -\displaystyle\frac{\Upsilon  h^2}{2} + \,\displaystyle\frac{\Upsilon  h\tanh(kh)}{2k}
\pm h\,\displaystyle\sqrt{\frac{
\Upsilon ^2\tanh^2(kh)}{4k^2}+g\,\frac{\tanh(kh)}{k}}\,,
\ee
\be \label{qlk7} Q_{\pm}^*= 2gh + \Big( \frac{m_{\pm}^*}{h}+\frac{\Upsilon  h}{2}\Big)^2.\ee
			Then for any real $m \neq m_{\pm}^*$, there exists a neighborhood in
$\R\times \R\times C^{2, \alpha}_{2\pi, e}(\bdr)$ of the point $(m, Q, h)$ on ${\mathcal K}_{\textnormal{triv}}$, where
$Q$ is related to $m$ by {\rm (\ref{zar})}, within which there are no solutions of {\rm (\ref{sys})} in ${\mathcal V}_{\pm}$.
On the other hand, for either choice of sign in $\pm$, there exists in the space
$\R\times \R\times C^{2, \alpha}_{2\pi, e}(\bdr)$ a continuous curve
\be \mathcal{K}_{\pm}=\{(m(s), Q(s), v_s):\ s\in (0,\infty)\}\ee
of solutions of {\rm (\ref{sys})}, such that the following properties {\rm (i)-(vi)} hold:
\newline Local behavior:
\begin{itemize}
\item[(i)]
$\lim_{s \downarrow 0} (m(s), Q(s), v_s)=(m_{\pm}^*, Q_{\pm}^*, h)$;
\item[(ii)] $v_s(x) = h+s\cos(x) + o(s)$ in $C^{2, \alpha}_{2\pi, e}(\bdr)$ as $s\downarrow 0$;
\item[(iii)] there exist a neighbourhood ${\mathcal W}_{\pm}$ of $(m_{\pm}^*, Q_{\pm}^*, h)$ in $\R\times \R\times C^{2, \alpha}_{2\pi, e}(\bdr)$ and $\varepsilon>0$ sufficiently small such that
\[\{(m,Q,v) \in {\mathcal W}_{\pm}:\ v \not\equiv h, \text{{\rm (\ref{sys})} holds}\}= \{(m(s), Q(s), v_s) :\ 0< |s| < \varepsilon \},\]
where, for any $s\in (-\infty,0)$, we have defined
\begin{equation}
 m(s):=m(-s),\quad Q(s):=Q(-s),\quad v_{s}(\cdot):= v_{-s}(\cdot+\pi).\label{ants}
\end{equation}
\end{itemize}
Global behavior:
\begin{itemize}
\item[(iv)]
$Q(s)-2gv_s(x)>0$ for all $x\in \R$ and $s\in (0,\infty)$;
\item[(v)]
$\mathcal{K}_{\pm}$ has a real-analytic reparametrization locally around each of its points;
\item[(vi)] one of the following alternatives occurs:
\begin{itemize}
\item[$(A_1)$] either $\mathcal{K}_{\pm}\subset {\mathcal V}_{\pm}$ and
\be\label{coonc}
\begin{array}{l}
\displaystyle\min\Big\{\frac{1}{1+||(m(s), Q(s), v_s)||_{\R\times\R\times C^{2, \alpha}_{2\pi, e}(\bdr)}},\,\\
\qquad\qquad\qquad\qquad \displaystyle\min_{x\in\R}\big\{Q(s)-2gv_s(x)\big\}\Big\}
\to 0 \qquad\text{as }s\uparrow\infty;
\end{array}\ee
\item[$(A_2)$] or there exists some $s^\ast\in (0,\infty)$ such that:
\[(m(s), Q(s), v_s)\in {\mathcal V}_{\pm} \quad\text{for all }s\in (0,s^\ast), \]
whereas $(m(s^\ast), Q(s^\ast), v_{s^\ast})$ satisfies {\rm (\ref{as03})}--{\rm (\ref{nodalloc})},
{\rm (\ref{ga2})} and {\rm (\ref{aza0})}, while instead of {\rm (\ref{ga1})} it satisfies
\begin{subequations}\label{ga}
\begin{align} 0< \frac{x}{k}+ \big(\mcc_{kh}(v-h)\big)(x)&\leq \frac{\pi}{k}\qquad\text{for all }x\in (0,\pi),\label{ga3}\\
 \frac{x_0}{k}+ \big(\mcc_{kh}(v-h)\big)(x_0)&=\frac{\pi}{k}\qquad\text{for some }x_0\in (0,\pi).\label{ga4}\end{align}
 \end{subequations}
\end{itemize}
\end{itemize}
\end{theorem}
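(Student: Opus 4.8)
The plan is to produce $\mathcal{K}_\pm$ by analytic global bifurcation from the trivial curve $\mathcal{K}_{\textnormal{triv}}$. With $h,k>0$ and $\Upsilon\in\R$ fixed, I write $v=h+w$ and let $\mathcal{F}$ be the map whose first component is the left-hand side of \eqref{vara} and whose second is the left-hand side of \eqref{meana}, so that $(m,Q,v)$ solves \eqref{sys} with $[v]=h$ exactly when $\mathcal{F}(m,Q,w)=0$ and $[w]=0$. Since $h$ is fixed, $\mcc_{kh}$ is a single bounded linear operator on each $C^{j,\alpha}_{2\pi}$ with $\mcc_{kh}(\mathrm{const})=0$, and every remaining ingredient of \eqref{vara}--\eqref{meana} is a polynomial in $w,w',\mcc_{kh}w,\mcc_{kh}w',m,Q$ and their periodic means; hence $\mathcal{F}$ is real-analytic between the relevant H\"older spaces, and $\mathcal{F}(m,Q(m),0)=0$ for all $m$, with $Q(m)$ as in \eqref{zar}. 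One may equally well solve \eqref{meana} for $Q$ near $\mathcal{K}_{\textnormal{triv}}$ by the implicit function theorem --- its $Q$-derivative is $-\bigl[(v')^2+(\tfrac{1}{k}+\mcc_{kh}(v'))^2\bigr]$, equal to $-1/k^2\neq0$ there --- and reduce to the single functional equation \eqref{vara}. In either form, the Fr\'echet derivative of $\mathcal{F}$ in the state variables along $\mathcal{K}_{\textnormal{triv}}$ is a compact perturbation of a constant multiple of the order-one operator $w\mapsto\mcc_{kh}(w')$, and therefore enjoys the Fredholm property required by the global bifurcation machinery; this persists on the open admissible set on which the conformal parametrization is regular and $Q-2gv>0$ --- by \eqref{add}, the set on which \eqref{win} holds.

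\textbf{Local bifurcation.} Linearizing $\mathcal{F}$ about $(m,Q(m),0)$, one checks that $\delta m$ and $\delta Q$ drop out of the linearized \eqref{vara}; testing it against the first Fourier mode $w=\cos x$ then reduces the kernel condition to a scalar equation which, after inserting $Q=Q(m)$ from \eqref{zar}, is a quadratic in $m$ whose roots are exactly $m^*_\pm$ of \eqref{lb7}, with $Q=Q^*_\pm$ from \eqref{qlk7}. At each such point the bifurcation kernel (complementary to the tangent of $\mathcal{K}_{\textnormal{triv}}$) is one-dimensional, carried by the profile $\cos x$, and the Crandall--Rabinowitz transversality condition is confirmed by a further short Fourier computation. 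The Crandall--Rabinowitz theorem, in its version for bifurcation from a curve of trivial solutions, then produces a local real-analytic branch of nontrivial solutions through $(m^*_\pm,Q^*_\pm,h)$; reparametrizing by the first Fourier coefficient $s$ of $v_s-h$ yields (i), (ii) and the local uniqueness (iii), the conventions in \eqref{ants} merely recording the symmetry $x\mapsto x+\pi$, $s\mapsto -s$.

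\textbf{Global continuation.} To continue the local branch I would apply the analytic global bifurcation theorem (Dancer; Buffoni--Toland). Besides the real-analyticity and Fredholm property above, it requires properness of $\mathcal{F}$ on closed bounded subsets of the admissible set: a sequence of solutions bounded in $C^{2,\alpha}$ is, by Schauder-type estimates for the periodic-Hilbert-transform equation \eqref{vara} together with Arzel\`a--Ascoli, precompact in $C^{2,\alpha'}$ for $\alpha'<\alpha$, and any limit again solves \eqref{sys}. The theorem then yields a maximal connected curve $\mathcal{K}_\pm$ admitting a local real-analytic reparametrization at each of its points --- this is (v) --- along which exactly one alternative occurs: $\mathcal{K}_\pm$ is non-compact in the admissible set, or it reconnects with $\mathcal{K}_{\textnormal{triv}}$, or it is a closed loop. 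Since the first Fourier coefficient of $v_s-h$ equals $s+o(s)$ near $s=0$, a reconnection with $\mathcal{K}_{\textnormal{triv}}$ or a closed loop would contradict the local uniqueness (iii); hence $\mathcal{K}_\pm$ is non-compact in the admissible set.

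\textbf{Reading off (iv), $(A_1)$, $(A_2)$, and the main obstacle.} First (iv): as long as $\mathcal{K}_\pm$ stays in the admissible set --- which it does at every finite $s$ --- the sign condition \eqref{aza0} holds, and \eqref{aza0} together with \eqref{add} gives $(Q-2gv_s)\bigl[(v_s')^2+(\tfrac{1}{k}+\mcc_{kh}(v_s'))^2\bigr]>0$ with the bracket nonnegative, whence $Q(s)-2gv_s>0$. Next, non-compactness must be unwound. If $\mathcal{K}_\pm$ lies entirely in $\mathcal{V}_\pm$, non-compactness forces, along some $s_n\uparrow\infty$, that $\|(m(s),Q(s),v_s)\|\to\infty$ or that the distance to the boundary of the admissible set tends to $0$; since on that boundary \eqref{win} degenerates, i.e.\ $\min_x\{Q(s)-2gv_s\}\to 0$ by \eqref{add}, this is precisely \eqref{coonc}, that is $(A_1)$. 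If instead $\mathcal{K}_\pm$ meets $\partial\mathcal{V}_\pm$ at some finite $s^\ast$, one must check that among the defining conditions \eqref{as03}--\eqref{aza0} only \eqref{ga1} can be the first to degenerate: \eqref{aza0} persists, since its failure would place the branch on the boundary of the admissible set rather than of $\mathcal{V}_\pm$; and the positivity \eqref{as03}, the strict monotonicity \eqref{nmbv}, and the open sign conditions \eqref{nodalloc} and \eqref{ga2} cannot fail first, by a Hopf-lemma and unique-continuation argument for the harmonic extension $V$ of $v$ to the strip $\mcr_{kh}$ and for $v'$ (an interior zero of $v'$ on $(0,\pi)$ would have to be a degenerate zero, incompatible with $v'(0)=v'(\pi)=0$, \eqref{nodalloc} and the equation). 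What then remains is that \eqref{ga1} degenerates to \eqref{ga3}--\eqref{ga4}, i.e.\ the free surface becomes vertical somewhere --- this is $(A_2)$. I expect the two genuinely delicate points to be the uniform properness of $\mathcal{F}$ on bounded parts of the admissible set, where the nonlocal operator $\mcc_{kh}$ obstructs the usual elliptic-compactness argument, and the boundary analysis at $s^\ast$, in which the Hopf-lemma signs must be tracked along the \emph{entire} branch to conclude that \eqref{ga4} is the only possibility.
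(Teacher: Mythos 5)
First, a point of order: the paper you were given does not prove this theorem at all --- it is imported verbatim from \cite{CSV} (``see Theorem 5 in \cite{CSV}''), so there is no in-paper proof to compare against. Measured against the actual proof in \cite{CSV}, your architecture is the right one: real-analytic global bifurcation in the sense of Dancer and Buffoni--Toland, a Crandall--Rabinowitz local analysis identifying $m^*_\pm$ via the first Fourier mode, Fredholm and properness verifications for the nonlocal operator, and a final case analysis converting non-compactness of the maximal curve into the alternatives $(A_1)$/$(A_2)$. Parts (i)--(v) are adequately sketched at the level of an outline.

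There are, however, two genuine gaps. First, you never address the opening assertion of the theorem --- that near any trivial point $(m,Q,h)$ with $m\neq m^*_\pm$ there are \emph{no} solutions of \eqref{sys} in $\mathcal{V}_\pm$ --- yet your exclusion of reconnection and of a closed loop leans on it implicitly: local uniqueness (iii) only controls what happens near $(m^*_\pm,Q^*_\pm,h)$, so without the non-bifurcation statement the global curve could in principle return to $\mathcal{K}_{\textnormal{triv}}$ at a different value of $m$ (or at $m^*_\mp$, which must be excluded separately using the sign in \eqref{aza0} that distinguishes the two branches). Second, and more seriously, the claim that among \eqref{as03}--\eqref{aza0} only \eqref{ga1} can be the first condition to degenerate at $s^*$ is the technical heart of the theorem, and ``a Hopf-lemma and unique-continuation argument for the harmonic extension'' is not a proof of it. One must show that the limiting triple at $s^*$, which a priori satisfies only the non-strict versions of all the inequalities, still corresponds to a genuine solution $(\Omega,\psi)$ of \eqref{g} (this already requires knowing \eqref{m2}--\eqref{m3} survive the limit, via \eqref{add} and part (iv)), and then run maximum-principle arguments for $\psi_Y$ and for the pressure-type function $R$ of \eqref{r} in the physical domain to recover the \emph{strict} inequalities \eqref{as03}--\eqref{nodalloc}, \eqref{ga2} and \eqref{aza0}; this is exactly the kind of argument the present paper carries out in full in its Theorem \ref{gra} for the analogous continuation of \eqref{graph}, and it occupies a substantial part of \cite{CSV}. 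Your parenthetical sketch for \eqref{nmbv} (``an interior zero of $v'$ would have to be degenerate'') does not engage with the actual mechanism, which is a Serrin-type corner/boundary-point analysis for the stream function rather than an ODE-style counting of zeros of $v'$. Finally, your justification that \eqref{aza0} persists because ``its failure would place the branch on the boundary of the admissible set'' is circular as stated; the correct route is through \eqref{add} combined with (iv) and \eqref{m3}.
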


In this result, (\ref{lb7}) and (\ref{qlk7}) identify the local bifurcation points along the trivial solution curve ${\mathcal K}_{\textnormal{triv}}$,
(i)--(iii) describe the local behavior of the curve,
and (iv)--(vi) describe the global behavior.  The alternative ($A_1$) means that either the curve
is unbounded in the function space or it approaches stagnation at the crest (and thus, may have waves of greatest height as limit points).
The alternative ($A_2$) means that solutions on  ${\mathcal K}_{\pm}$ that correspond
to physical water waves with qualitative properties as described by (\ref{as03})-(\ref{aza0}) do exist
{\it until} a limiting configuration
whose profile self-intersects on the line strictly above the trough is reached at $s=s^\ast$.

Note that for the laminar flows given by (\ref{lb8}), the horizontal velocity at the free
surface is $\displaystyle\Big(\frac{m}{h}+\frac{\Upsilon  h}{2}\Big)$. Introducing the parameter
\be\lambda=\frac{m}{h}+\frac{\Upsilon  h}{2},\label{lamb}\ee
it is observed
in \cite{CSV}  that, for  a flow with a flat free surface at which nonlinear small-amplitude waves bifurcate,
the horizontal velocity at the surface is given by
\begin{equation}
\lambda_{\pm}^*=\frac{\Upsilon \tanh(kh)}{2k} \pm
\displaystyle\sqrt{\frac{
\Upsilon ^2\tanh^2(kh)}{4k^2}+g\,\frac{\tanh(kh)}{k}}.\label{lb6}
\end{equation}
Note that $\lb_+^*>0$ and $\lb_-^*<0$, so that there are no stagnation points on the free surface $\mcs$ for the waves of small amplitude whose existence is guaranteed by
local bifurcation. This property holds for all the genuine waves provided by Theorem \ref{glbp}, even for the waves
of large amplitude that are not merely small perturbations of a laminar flow.  It is also noted in \cite{CSV}  that,
for any solution $(m,Q,v)$ of (\ref{sys}), the function $v$ is necessarily smooth (of class $C^\infty$) on $\R$.

Let us also mention that \cite{CSV} contains in fact a more comprehensive global bifurcation theory for (\ref{sys}) than that in Theorem \ref{glbp}, where we have restricted attention only to nontrivial waves with the nodal properties expressed by {\rm $(\ref{as03})-(\ref{aza0})$}.

\section{On the favorable branch the waves do not overhang.}
Numerical simulations (for instance \cite{KS2})  clearly indicate that the behavior of the solutions on the
branches ${\mathcal K}_{\pm}$ can be quite different,
depending on the choice $\pm$ of the branch and on the sign of $\Upsilon$.
The main result of this section partly confirms the numerical observations, ruling out alternative $(A_2)$ on
the favorable bifurcating branch.
Note that the equations \eqref{sys} are invariant under the change of parity $\Upsilon \to -\Upsilon$, $m\to -m$.
This simply reverses the vorticity and the direction of the flow in the moving frame.  The favorable case is
represented either by the curve ${\mathcal K}_-$ with $\Upsilon>0$ or by the curve ${\mathcal K}_+$ with $\Upsilon<0$.
Thus we may consider just the former case.
The following theorem establishes that, all along the branch, the free surface of the wave is the graph of a function and no flow reversal occurs within the corresponding fluid domain.

\begin{theorem}\label{gra}
Let $\Upsilon \geq 0$.
Then, in the notation of Theorem \ref{glbp}, the bifurcating curve ${\mathcal K}_-$ of
solutions  of  {\rm (\ref{sys})} satisfies alternative $(A_1)$.
Moreover, any solution $(m,Q,v)$ on ${\mathcal K}_-$ satisfies
\be \frac{1}{k}+ \big(\mcc_{kh}(v')\big)(x)>0\quad\text{for all }x\in\R\label{graph},\ee
and, if $(\Omega, \psi)$ denotes the corresponding solution of {\rm (\ref{g})},  then
\be \psi_Y <0 \quad\text{in }\overline\Omega.\label{nost}\ee
\end{theorem}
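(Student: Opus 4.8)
The plan is to establish the three assertions in the stated order, using the alternative $(A_1)$ vs.\ $(A_2)$ dichotomy from Theorem \ref{glbp} and the structural sign condition \eqref{aza0}. First I would rule out $(A_2)$. Suppose toward a contradiction that $(A_2)$ holds, so there is a first $s^\ast\in(0,\infty)$ at which the profile parametrization degenerates via \eqref{ga4} while \eqref{as03}--\eqref{nodalloc}, \eqref{ga2} and \eqref{aza0} still hold. The key is to exploit the sign in \eqref{aza0}: on ${\mathcal K}_-$ with $\Upsilon\geq 0$ we are in ${\mathcal V}_-$, so
\[
\frac{m}{kh} - \frac{\Upsilon}{2kh}[v^2]-\Upsilon \mcc_{kh}(vv') +\Upsilon v\,\Big(\frac{1}{k}+\mcc_{kh}(v')\Big) < 0 \qquad\text{on }\R,
\]
and this inequality persists in the limit $s\to s^\ast$. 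I would then combine this with \eqref{add} (which holds at $s^\ast$ as well, by continuity), the bifurcation sign $\lambda_-^*<0$ from \eqref{lb6}, and the monotonicity properties \eqref{nmbv}--\eqref{nodalloc} to derive that $\frac1k+\mcc_{kh}(v')>0$ everywhere, which in turn forces the horizontal parametrization $u(x)=\frac{x}{k}+\mcc_{kh}(v-h)(x)$ to be \emph{strictly} increasing on $(0,\pi)$; this contradicts \eqref{ga4}, where $u$ attains the value $\pi/k$ at an interior point $x_0\in(0,\pi)$ before $x=\pi$. Hence $(A_2)$ is impossible and $(A_1)$ holds.

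Next, for the graph property \eqref{graph}: once $(A_1)$ is known, ${\mathcal K}_-\subset{\mathcal V}_-$, so in particular every solution on the branch satisfies \eqref{ga2} at the endpoints and \eqref{aza0} with the minus sign throughout $\R$. I would show that the quantity $w(x):=\frac1k+\mcc_{kh}(v')(x)$ cannot vanish. The idea is that if $w(x_1)=0$ for some $x_1$, then at that point \eqref{add} forces $(Q-2gv)(v')^2 = \big(\frac{m}{kh}-\frac{\Upsilon}{2kh}[v^2]-\Upsilon\mcc_{kh}(vv')\big)^2$ (the $\Upsilon v\,w$ term drops out), but also \eqref{win}/\eqref{aza0} would read $\frac{m}{kh}-\frac{\Upsilon}{2kh}[v^2]-\Upsilon\mcc_{kh}(vv')<0$ at $x_1$; combined with a continuity/connectedness argument along the branch starting from the laminar limit, where $w\equiv \frac1k>0$, and using that $w$ is continuous in both $x$ and $s$, one concludes $w>0$ is preserved. (Equivalently: the set of $s$ for which $w>0$ on all of $\R$ is open, closed and nonempty in $(0,\infty)$.) This gives \eqref{graph}, and then the standard fact recalled in Section 2—that \eqref{graph} makes the conformal parametrization $x\mapsto(u(x),v(x))$ a global graph—shows the surface ${\mathcal S}$ is non-overhanging.

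Finally, for \eqref{nost}: using the velocity formula \eqref{vel}, the horizontal velocity in $\overline\Omega$ is $\psi_Y = \dfrac{V_x\zeta_x+V_y\zeta_y}{V_x^2+V_y^2}+\Upsilon V$. On the bed ${\mathcal B}$ and on the surface ${\mathcal S}$ one computes $\psi_Y$ explicitly: on ${\mathcal S}$ it reduces (via \eqref{zeta}, \eqref{win} and \eqref{graph}) to a multiple of $-\big(\frac{m}{kh}-\frac{\Upsilon}{2kh}[v^2]-\Upsilon\mcc_{kh}(vv')+\Upsilon v\,w\big)/w$, which by \eqref{aza0} (minus sign) and \eqref{graph} is strictly negative; on ${\mathcal B}$ one gets $\psi_Y = \frac{m}{kh\,V_y}$-type expression plus $\Upsilon\cdot 0$, again of a definite sign once the sign of $m$ is pinned down near the laminar limit and propagated along the branch. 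Since $\zeta$ satisfies an elliptic problem (system \eqref{gc}) and $V$ is harmonic, $\psi_Y$ is a (ratio expressible via) harmonic-type function with no interior critical degeneracy, so the minimum principle / maximum principle applied to an appropriate auxiliary harmonic function (e.g.\ $\psi_Y - \Upsilon V$ is harmonic in $(X,Y)$ since $\Delta\psi=\Upsilon$) forces $\psi_Y<0$ throughout $\overline\Omega$ from its boundary values.

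The main obstacle I anticipate is the propagation-along-the-branch step: turning the pointwise sign conditions \eqref{aza0}, \eqref{ga2} at the bifurcation point into sign conditions valid for \emph{all} $s$, and in particular showing $w=\frac1k+\mcc_{kh}(v')$ never touches zero. This requires a careful open–closed argument in $s$ that interfaces the nonlocal operator $\mcc_{kh}$ with the algebraic identity \eqref{add}; the delicate point is ruling out a tangential zero of $w$ (where $w=w'=0$ simultaneously), which is presumably where the specific structure of \eqref{sys} and the favorable sign $\Upsilon\geq 0$ must be used in an essential way.
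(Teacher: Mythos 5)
Your overall architecture --- an open--closed continuation argument in the curve parameter $s$ for the property $w:=\tfrac1k+\mcc_{kh}(v')>0$, with alternative $(A_2)$ excluded because the global validity of \eqref{graph} implies \eqref{ga1} --- matches the paper's. But the proposal has a genuine gap exactly at the point you flag in your last paragraph: you never rule out the limiting configuration in which $w\ge 0$ everywhere but $w(x_0)=0$ at some $x_0\in(0,\pi)$, i.e.\ a vertical tangent of the profile. Neither \eqref{add} nor \eqref{aza0} excludes this by pointwise algebra: at such a point \eqref{add} remains consistent (the term $(v')^2$ carries the whole right-hand side), and your first paragraph's claim that \eqref{aza0}, \eqref{add} and the nodal properties ``force'' $w>0$ is precisely the assertion to be proved, not a step in its proof. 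The same unproved assertion is what the closedness half of your open--closed argument requires.

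The missing ingredient in the paper is a physical-plane argument \emph{at} the hypothetical degenerate solution $s^\star$. From the velocity formulas one sees that on the free surface $\psi_Y$ equals $u'(x)$ times a strictly negative factor (by \eqref{aza0} with the minus sign), so $\psi_Y\le 0$ on $\mcs$ with a zero exactly at the vertical-tangent point; since $\psi_Y$ is harmonic and $(\psi_Y)_Y=\Upsilon\ge0$ on the bed, the strong maximum principle and the Hopf lemma give $\psi_Y<0$ in $\Omega\cup\mcb$ and hence $|\nabla\psi|\neq0$ in $\overline\Omega$. One then introduces Spielvogel's function $R=\tfrac12|\nabla\psi|^2+gY-\tfrac12 Q-\Upsilon\psi$, which satisfies the elliptic equation \eqref{elef} with positive right-hand side (because $\Upsilon\ge0$ and $\psi_Y<0$) and $R_Y=g>0$ on $\mcb$; hence $R$ attains its maximum exactly on $\mcs$, and the Hopf lemma forces the normal derivative $R_X=-\psi_X\psi_{YY}$ to be nonzero at the vertical-tangent point. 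On the other hand, differentiating $\psi(u(x),v(x))=0$ twice at $x_0$ and using $u'(x_0)=u''(x_0)=0$ (an interior minimum of $u'\ge0$) yields $\psi_{YY}=0$ there --- a contradiction. This is where the hypothesis $\Upsilon\ge0$ enters essentially, twice, and it is absent from your proposal. Your treatment of \eqref{nost} is also off in one respect: the paper does not compute or propagate the sign of $\psi_Y$ on the bed; it only uses the Neumann-type information $(\psi_Y)_Y=\Upsilon\ge0$ on $\mcb$ to exclude a bottom maximum, which is considerably more robust than trying to pin down the sign of $m$ along the branch.
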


\begin{proof} The proof that follows is similar in spirit to that of Theorem \ref{glbp}, being based on a continuation argument that shows that property (\ref{graph}) is preserved all along the curve ${\mathcal K}_-$. Since (\ref{graph}) represents a strengthening of (\ref{ga1}) and (\ref{ga2}), and since alternative $(A_2)$ involves the failure of (\ref{ga1}), the global validity of (\ref{graph}) prevents the occurrence of $(A_2)$.

Though perhaps not impossible, it appears difficult to study the validity of (\ref{graph}) in isolation, and thus our approach will be to study it in conjunction with all the other properties that occur in Theorem \ref{glbp}. We consider therefore in what follows the set
\be{\mathcal U}_-=\{(m, Q,v)\in {\mathcal V}_{-}: (\ref{graph}) \text{ holds}\},\ee
and define
 \be J=\{s\in (0,\infty): (m(s), Q(s), v_s)\in {\mathcal U}_-\}.\ee
 Since ${\mathcal U}_-$ is an open set in  $\R\times \R\times C^{2, \alpha}_{2\pi, e}(\bdr)$, $J$ is an open subset of $(0,\infty)$.
We claim that  $J=(0,\infty)$.
To that aim, it is necessary to revisit some of the arguments in the proof of Theorem \ref{glbp} in \cite{CSV} .

{\it We argue by contradiction}, assuming that the open set $J$ is not the whole of $(0,\infty)$.
Of course it is immediate that there exists $\ep>0$ such that $(0,\ep)\subset J$.
Let $s^\star$ be the upper endpoint of the largest open interval that contains $(0, \ep)$ and is contained in $J$.
Then $(0,s^\star)\subset J$ and $s^\star\notin J$.
In what follows we shall investigate the properties of the solution $(m(s^\star), Q(s^\star), v_{s^\star})$.

The fact that necessarily $v_{s^\star}\not\equiv h$ follows by the same argument as in \cite{CSV},
which involves the knowledge of all local bifurcation points
on ${\mathcal K}_{\textnormal{triv}}$, the different nodal patterns of the solutions on the local bifurcating curves
(expressed by conditions such as (\ref{ants}), (\ref{aza0}) and (\ref{nmbv})), and the specific manner of
construction of the global curves in the real-analytic theory of Dancer, Buffoni and Toland.

 For notational simplicity, we shall denote $(m(s^\star), Q(s^\star), v_{s^\star})$
 by $(m,Q, v)$. This is a limit of solutions satisfying \eqref{as03}--\eqref{aza0} and (\ref{graph}).
 The definition of $s^\star$ implies that the following
 {\it non-strict} inequalities hold:
\begin{gather} v(x)\geq 0 \quad\text{for all }x\in\R, \label{as5}\\
 v'(x)\leq 0\quad\text{for all }x\in [0,\pi],  \label{as1}\\
 0\leq \frac{x}{k}+ \big(\mcc_{kh}(v-h)\big)(x)\leq \frac{\pi}{k}\quad \text{for all }x\in [0,\pi],  \label{as2}\\
\frac{m}{kh} - \frac{\Upsilon }{2kh}[v^2]-\Upsilon \mcc_{kh}(vv') +\Upsilon  v\,\Big(\frac{1}{k}+\mcc_{kh}(v')\Big)\leq 0
\quad\text{on }\R,\label{aza}\end{gather}
as well as
\be \frac{1}{k}+ \big(\mcc_{kh}(v')\big)(x)\geq 0\quad\text{for all }x\in\R\label{gra0}.\ee
Now an examination
of the arguments in Section 4 of \cite{CSV} shows that the inequalities (\ref{as5})--(\ref{aza}), together with the fact that $(m,Q,v)$
 is the limit of a sequence of solutions that correspond to solutions of (\ref{g}) in the physical plane
 (which is the case here because of the definition of $s^\star$), are enough to ensure that
 the strict inequalities (\ref{as03})--(\ref{nodalloc}) and (\ref{ga2}) are satisfied, while
  (\ref{aza0}) holds in the form
  \be\frac{m}{kh} - \frac{\Upsilon }{2kh}[v^2]-\Upsilon \mcc_{kh}(vv') +\Upsilon  v\,\Big(\frac{1}{k}+\mcc_{kh}(v')\Big)< 0
\quad\text{on }\R.\label{aza2}\ee
 On the other hand,
 if (\ref{graph}) were also satisfied, then it would imply the validity of
 (\ref{ga1}), and thus would contradict the definition of $s^\star$.
 Therefore (\ref{graph}) necessarily fails. This property, combined with the validity of (\ref{ga2}) and the periodicity and evenness of $v$, ensure that there exists $x_0\in (0,\pi)$ such that
\be \frac{1}{k}+ \big(\mcc_{kh}(v')\big)(x_0)= 0.\label{vert}\ee
Note also that the validity of (\ref{ga2}) and (\ref{gra0}) implies that (\ref{ga1}) also holds. As explained in \cite{CSV}, in combination with the evenness and monotonicity of $v$, this ensures the validity of (\ref{m2}). Moreover, since (\ref{pos}) also holds, and (\ref{m3}) holds as a consequence of (\ref{aza2}) and (\ref{add}), it follows that the solution $(m,Q,v)$ under discussion, which is in fact $(m(s^\star), Q(s^\star), v_{s^\star})$, corresponds to a solution $(\Omega,\psi)$ of (\ref{g}). We now work in the physical plane and show that such a solution, with all the properties that have been established so far, cannot exist.

For simplicity, we denote
\be u(x)=\frac{x}{k}+ \big(\mcc_{kh}(v-h)\big)(x)\quad\text{for all }x\in \R,\label{uuu}\ee
so that $u(x)=U(x,0)$ for all $x\in \R$, where $U$ is a harmonic conjugate of $-V$, the holomorphic function $U+iV$ being a conformal mapping from $\mcr_{kh}$ onto $\Omega$.
Then $\mcs$, the top boundary of $\Omega$, admits the parametrization
\be \mcs=\{(u(x),v(x)):x\in\R\},\label{eqss}\ee
whose properties may be summarized for easy reference as follows:
\begin{gather}
\text{$v$ is $2\pi$-periodic and even,}\label{vet0}\\
\text{$v'(x)<0$ for all $x\in (0,\pi)$},\\
\text{$u$ is odd, $x\mapsto\left(u(x)-\frac{2\pi}{k} x\right)$ is $2\pi$-periodic,}\label{vet3}\\
\text{$(u'(x))^2+(v'(x))^2>0$ for all $x\in\R$},\label{vet4}\\
\text{$u'(0)>0$, $u'(\pi)>0$},\label{vet1}\\
\text{$u'(x)\geq 0$ for all $x\in \R$},\label{vet2}\\
\text{there exists $x_0\in (0,\pi)$ such that $u'(x_0)=0$}.\label{veta}
\end{gather}

It is \eqref{veta} that will lead to the contradiction.
Let us examine the sign of $\psi_Y$ in $\overline\Omega$.
Note first that, as a consequence of (\ref{gc3}), we have on $y=0$ that
\[\zeta_x=-\Upsilon  VV_x,\]
which, when substituted in the formula (\ref{vel}) for the velocity field, and
using also the Cauchy--Riemann equations, leads to the following relations, for all $x\in\R$:
\begin{align}
\psi_Y(u(x),v(x))
&= \frac{u'(x)\{\zeta_y(x,0)+\Upsilon  V(x,0) V_y(x,0)\}}{(u'(x))^2+(v'(x))^2},\label{fpi}
\\ -\psi_X(u(x),v(x))
& =\frac{v'(x)\{\zeta_y(x,0)+\Upsilon  V(x,0) V_y(x,0)\}}{(u'(x))^2+(v'(x))^2}.\label{fpx}
\end{align}
Observe that, by (\ref{vzc}), we have on $y=0$ that
\[\zeta_y+\Upsilon  V V_y= \frac{m}{kh} - \frac{\Upsilon }{2kh}[v^2]-\Upsilon \mcc_{kh}(vv') +\Upsilon  v\,\Big(\frac{1}{k}+\mcc_{kh}(v')\Big),\]
and this quantity is strictly negative, as (\ref{aza0}) shows.
It follows from (\ref{fpi}) and \eqref{vet2} that
\be\psi_Y\leq 0\text{ on }\mcs,\quad \psi_Y(u(x_0),v(x_0))=0\quad\text{and}\quad\psi_Y\not\equiv 0 \text{ on }\mcs.\label{sign}\ee
But as a consequence of (\ref{g1}), the function $\psi_Y$ is harmonic in $\Omega$ and satisfies
\[(\psi_Y)_Y=\Upsilon \geq 0 \quad\text{on the bottom $\mcb$ of the fluid domain}.\]
 Thus, by the strong maximum principle and the Hopf boundary point lemma, the maximum of $\psi_Y$ over the periodic domain $\overline\Omega$ can be attained neither in $\Omega$ nor on $\mcb$. It is now a consequence of (\ref{sign}) that
 \be \psi_Y<0\quad\text{in }\Omega\cup \mcb.\label{pl}\ee
Also, it is a consequence of (\ref{fpi}), (\ref{fpx}) and (\ref{aza2}) that $|\nabla\psi|\neq 0$ on $\mcs$. In combination with (\ref{pl}), this implies that
 \[|\nabla\psi|\neq 0\quad\text{in }\overline\Omega.\]

In order to rule out the existence of a solution of (\ref{g}) for which the free boundary $\mcs$ has the
properties (\ref{vet0})--(\ref{veta}), we follow an idea that goes back to Spielvogel \cite{Sp},
and consider in $\overline{\Omega}$ the function $R$ given by \be
R:=\frac{1}{2}\vert\nabla\psi\vert^{2}+gY-\frac{1}{2}Q-\Upsilon \psi.\label{r}\ee
The function $R$ is in fact, up to an additive constant, the negative of the fluid pressure.
A direct calculation (see \cite{SW, V1}) shows that $R$ satisfies in $\Omega$
\be \Delta R-\frac{2(R_X+\Upsilon \psi_X)}{|\nabla
\psi|^2}R_X-\frac{2(R_Y-2g+\Upsilon \psi_Y)}{|\nabla \psi|^2}R_Y=\frac{2g}{|\nabla\psi|^2}(g-\Upsilon \psi_Y).\label{elef}
\ee
(Note that the definition of vorticity $\gamma$
considered in \cite{SW, V1} differs by sign from the definition $\Upsilon$ we are considering here.)
One can also easily check that \be R_Y=g>0\quad\text{on
}\mcb\label{bco1}.\ee
As a consequence of (\ref{pl}) and the assumption $\Upsilon \geq 0$, the right-hand side in (\ref{elef}) is positive.
It follows from (\ref{elef}) and (\ref{bco1}) that the maximum over $\overline\Omega$ of $R$
can be attained neither in $\Omega$ nor on $\mcb$,
and therefore is attained all along $\mcs$, since $R=0$ there by (\ref{g3}) and (\ref{g2}).
From the Hopf boundary point lemma we infer that the normal derivative of $R$ has a strict sign
all along $\mcs$, and in particular at the point $(u(x_0),v(x_0))$, at which the tangent is vertical by \eqref{veta} and \eqref{sign}.
We therefore deduce that at $(u(x_0), v(x_0))$ we have
 \be 0\neq R_X=\psi_X\psi_{XX}+\psi_Y\psi_{XY}-\Upsilon  \psi_X=-\psi_X\psi_{YY},\label{mbv}\ee
 where we have taken into account (\ref{g1}) and the fact  from \eqref{sign} that $\psi_Y=0$ at that point.

  On the other hand, twice differentiating
  \[\psi(u(x),v(x))=0\quad\text{for all }x\in \R\]
   with respect to $x$, evaluating the result at $x_0$, and using $\psi_Y(u(x_0),v(x_0))=0$ and $u'(x_0)=0$,
   we find that
  \be \psi_X(u(x_0),v(x_0))u''(x_0)+\psi_{YY}(u(x_0),v(x_0))(v'(x_0))^2=0.\label{derd}\ee
  Note, however, that (\ref{vet2}) and (\ref{veta}) imply that $u''(x_0)=0$.
  Thus taking also into account that $v'(x_0)\neq 0$, it follows from (\ref{derd}) that $\psi_{YY}(u(x_0),v(x_0))=0$.
 This contradicts (\ref{mbv}).

  The source of the contradiction can only be the assumption that $J\neq (0,\infty)$, which must therefore be false. We have thus proved that $J=(0,\infty)$, which means that
  \[{\mathcal K}_-\subset {\mathcal U}_-,\]
  and therefore all solutions on ${\mathcal K}_-$ satisfy (\ref{graph}), as required.

  It remains to prove the validity of (\ref{nost}) for all solutions on ${\mathcal K}_-$. Let $(m,Q,v)$ be an arbitrary such solution, and let $(\Omega,\psi)$ be the associated solution of $(\ref{g})$. Then the top boundary $\mcs$ of $\Omega$ admits the parametrization (\ref{eqss}), where the function $u$ is given by (\ref{uuu}), and now the properties (\ref{vet0})--(\ref{vet3}) hold, while instead of (\ref{vet4})--(\ref{veta}) we now have the condition
  \be u'(x)>0\quad\text{for all }x\in\R.\ee
   Application of the maximum principle
   for $\psi_Y$ in $\Omega$ in a way similar to the preceding part of the proof now leads to (\ref{nost}) instead of (\ref{pl}).
   This completes the proof of Theorem \ref{gra}.
\end{proof}

The validity of \eqref{nost} and the fact that the free surface profile is a graph place the
solutions on ${\mathcal K}_-$ within the framework of the considerations made in \cite{CE}, which yield
the folllowing regularity result.

\begin{corollary}
Let $\Upsilon  \ge 0$. Then any H\"older continuosly differentiable solution $v$ on
the curve ${\mathcal K}_-$ is real analytic.
\end{corollary}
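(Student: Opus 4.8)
The plan is to invoke the regularity theory for steady water waves with vorticity developed in \cite{CE}, which applies precisely to solutions of \eqref{g} whose free surface is the graph of a function and for which there is no stagnation in the closed fluid domain. By Theorem \ref{gra}, every H\"older continuously differentiable solution $v$ on ${\mathcal K}_-$ satisfies \eqref{graph}, so the associated conformal parametrization $x\mapsto(u(x),v(x))$ of $\mcs$ is that of a graph, and moreover \eqref{nost} gives $\psi_Y<0$ throughout $\overline\Omega$. Hence the corresponding solution $(\Omega,\psi)$ of \eqref{g} is a classical graph-type steady wave with $|\nabla\psi|\neq 0$ everywhere in $\overline\Omega$ (in particular on $\mcs$), which is exactly the hypothesis under which the results of \cite{CE} are formulated.

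First I would record that $v\in C^{2,\alpha}_{2\pi,e}(\bdr)$ — this is built into the definition of the bifurcation curve ${\mathcal K}_-$ in Theorem \ref{glbp}, so any "H\"older continuously differentiable solution $v$ on ${\mathcal K}_-$" is automatically $C^{2,\alpha}$. (One could even invoke the remark after Theorem \ref{glbp} that any solution $v$ of \eqref{sys} is automatically $C^\infty$, but analyticity is a strictly stronger conclusion and is not contained in that smoothness statement.) Next I would pass to the physical domain $\Omega$ via the conformal map $U+iV$ of the Appendix, and translate \eqref{nost} into the statement that the flow has no stagnation point in $\overline\Omega$, so that the streamlines, and in particular the free surface $\mcs=\{Q=|\nabla\psi|^2+2gY\}$, are non-degenerate level sets. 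The free surface can then be written locally as $Y=\eta(X)$ with $\eta\in C^{2,\alpha}$, and the Bernoulli condition \eqref{g2} together with $\Delta\psi=\Upsilon$ in $\Omega$ and the absence of stagnation puts us in the setting of \cite{CE}. Applying the main analyticity theorem there yields that $\eta$ is real analytic, and that $\psi$ is real analytic up to $\mcs$; pulling back through the (real analytic, since $V$ is harmonic and the boundary is analytic) conformal map shows that $v$ itself is real analytic on $\bdr$.

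The only genuine point requiring care is verifying that the specific normalization and sign conventions of \cite{CE} match ours — in particular the sign of the vorticity (the excerpt already flags, in the proof of Theorem \ref{gra}, that \cite{SW,V1} use the opposite sign convention, and \cite{CE} may do likewise) and the fact that \cite{CE} typically treats the problem in terms of a height function or a semi-Lagrangian change of variables that presupposes $\psi_Y\neq 0$; this is guaranteed here by \eqref{nost}. I expect this bookkeeping to be the main, though entirely routine, obstacle: once the hypotheses of \cite{CE} are seen to hold verbatim for graph-type, stagnation-free solutions with $\Upsilon\ge 0$, the analyticity of $v$ follows immediately, and no new estimate is needed. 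I would close by noting that the evenness of $v$ is preserved under the analytic regularization and plays no role in the argument beyond being inherited from membership in $C^{2,\alpha}_{2\pi,e}(\bdr)$.
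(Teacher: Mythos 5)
Your proposal is correct and follows essentially the same route as the paper, which likewise treats the corollary as an immediate consequence of Theorem \ref{gra}: the graph property \eqref{graph} together with $\psi_Y<0$ in $\overline\Omega$ from \eqref{nost} places the solutions on ${\mathcal K}_-$ squarely within the framework of the analyticity result of \cite{CE}. The additional bookkeeping you describe (passing to the physical variables, checking the absence of stagnation needed for the semi-Lagrangian formulation, and pulling analyticity back through the conformal map) is exactly the content the paper leaves implicit in its one-sentence justification.
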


\section{Bound on the amplitudes of the waves}
In this section we derive an explicit bound on the wave amplitudes of the favorable waves. We then give the proof of Theorem \ref{mainthm}.
For brevity we write $v(x)=v_s(x)$, for any $s>0$ and all $x\in\R$.

\begin{theorem}   \label{bound} Let $\Upsilon\geq 0$. Then,
along the whole global bifurcation curve ${\mathcal K}_-$, we have the estimate
\begin{subequations}\label{Fii}
\begin{equation}
v(0)-v(\pi) \le \sqrt{\frac{36g^2}{\Upsilon^4}+\frac{24\pi g}{\Upsilon^2k\beta_{hk}(\tfrac{\pi}{2})}} -\frac{6g}{\Upsilon^2}  < \frac{2\pi}{k\beta_{kh}(\tfrac{\pi}{2})} \le
\frac{2\pi^2}{(\pi-2)k}\quad \text{if}\quad \Upsilon>0,
\end{equation}
and
\be v(0)-v(\pi) < \frac{2\pi}{k\beta_{kh}(\tfrac{\pi}{2})} \le
\frac{2\pi^2}{(\pi-2)k}\quad\text{if}\quad \Upsilon=0. \ee
\end{subequations}
\end{theorem}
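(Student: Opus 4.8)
The plan is to work with the function $R$ defined in \eqref{r}, whose value is $0$ on the free surface $\mcs$ by \eqref{g3} and \eqref{g2}, and to extract a pointwise differential inequality at the crest of the wave. By Theorem \ref{gra} we already know that the waves on ${\mathcal K}_-$ are graphs, that $\psi_Y<0$ throughout $\overline\Omega$, and that there are no stagnation points. Since the wave is symmetric about $x=0$, the crest sits at $(u(0),v(0))$ and the trough at $(u(\pi),v(\pi))$, and at both of these points the free surface has a horizontal tangent, so $\psi_X=0$ there and $|\nabla\psi|^2=\psi_Y^2$. Combining this with the Bernoulli condition \eqref{g2}, namely $\psi_Y^2 = Q - 2gv$, evaluated at crest and trough, gives a relation between $v(0)$, $v(\pi)$, $Q$ and the values of $\psi_Y$ at those two points. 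The elevation difference $v(0)-v(\pi)$ is thus controlled by $\psi_Y(\text{trough})^2-\psi_Y(\text{crest})^2$ divided by $2g$, so everything reduces to bounding $|\psi_Y|$ at the trough from above and $|\psi_Y|$ at the crest from below, and to using the structure of \eqref{elef}.

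The heart of the argument is a maximum-principle / Hopf-lemma analysis of $R$, exactly in the spirit of the proof of Theorem \ref{gra}: since $\psi_Y<0$ and $\Upsilon\ge 0$, the right-hand side of \eqref{elef} is strictly positive, and $R_Y=g>0$ on $\mcb$ by \eqref{bco1}, so $R$ attains its maximum over $\overline\Omega$ only on $\mcs$, where $R\equiv 0$; hence $R<0$ in $\Omega\cup\mcb$ and $\partial_n R>0$ along $\mcs$. At the crest and the trough the outward normal to $\mcs$ is vertical, so $\partial_n R$ reduces (up to sign, and using $\psi_X=0$, $\Delta\psi=\Upsilon$ from \eqref{g1}) to $\pm R_Y = \pm(\psi_Y\psi_{YY}+g-\Upsilon\psi_Y)$. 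The second-order terms are handled by differentiating $\psi(u(x),v(x))=0$ twice, as in \eqref{derd}: at $x=0$ and $x=\pi$ one has $u'\ne 0$ (the profile is a graph there), $\psi_Y\ne 0$, and the known signs $v''(0)<0$, $v''(\pi)>0$ from \eqref{nodalloc}, which pin down the sign of $\psi_{YY}$ at each point and convert the Hopf inequalities into genuine one-sided bounds on $|\psi_Y|$ at crest and trough. Quantitatively, one also needs the $\Upsilon\psi_Y$ term: at the trough $\psi_Y<0$ pushes $R_Y$ in the favorable direction, while at the crest it must be absorbed, and this is where the explicit dependence on $\Upsilon$ and the periodic-Hilbert-transform constant $\beta_{kh}(\tfrac\pi2)$ enters — the latter coming from the sharp bound on $|\mcc_{kh}(v')|$ (equivalently on $u'$) at the quarter-period point, which is the mechanism already used in Section 4 to control $v(0)-v(\pi)$ in the first place.

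Assembling these pieces yields an inequality of the form $2g\,(v(0)-v(\pi)) + c_1\Upsilon^2(v(0)-v(\pi))^2 \le c_2$ with explicit constants $c_1$ (essentially $\tfrac16$, from the quadratic head-amplitude relation in part (iv) of Theorem \ref{mainthm}) and $c_2$ involving $\tfrac{2\pi g}{k\beta_{kh}(\pi/2)}$; solving this quadratic in $v(0)-v(\pi)$ produces precisely the displayed radical expression $\sqrt{36g^2/\Upsilon^4 + 24\pi g/(\Upsilon^2 k\beta_{hk}(\pi/2))} - 6g/\Upsilon^2$. The second inequality in \eqref{Fii} is then elementary: using $\sqrt{a+b}-\sqrt a \le b/(2\sqrt a)$ with $a=36g^2/\Upsilon^4$ cancels the $\Upsilon$-dependence and leaves the $\Upsilon$-independent bound $2\pi/(k\beta_{kh}(\pi/2))$, which for $\Upsilon=0$ must instead be obtained directly from the linear inequality $2g\,(v(0)-v(\pi))\le c_2$ (the quadratic term being absent). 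Finally, the last inequality $2\pi/(k\beta_{kh}(\pi/2)) \le 2\pi^2/((\pi-2)k)$ is the explicit lower bound $\beta_{kh}(\pi/2) \ge (\pi-2)/\pi$ on the Hilbert-transform kernel constant, proved in the Appendix and independent of $kh$. I expect the main obstacle to be the bookkeeping in the maximum-principle step: one must be careful that the Hopf lemma is applied on the correct (periodic) portion of $\overline\Omega$, that the signs of $\psi_{YY}$ at crest and trough are deduced rigorously from \eqref{nodalloc} together with $u'\ne 0$ rather than assumed, and — most delicately — that the $\Upsilon\psi_Y$ contribution at the crest is bounded by exactly the quantity that makes the final quadratic close with the stated constants rather than merely up to an unspecified constant.
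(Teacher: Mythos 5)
Your proposal takes a genuinely different route from the paper --- a physical-plane maximum-principle argument for the pressure-type function $R$ of \eqref{r}, rather than the paper's direct manipulation of the nonlocal equation --- but it has a gap at its quantitative core. The paper's proof never returns to the physical plane: it recasts \eqref{vara} as the equation \eqref{ef} for $f=\tfrac{k}{2g}(Q-2gv)$, subtracts the equation at $x=\pi$ and $x=0$, uses the sign condition \eqref{aza0} (multiplied by $\Upsilon/g\ge 0$ and averaged at $0$ and $\pi$) to reduce the left side to $f(\pi)-f(0)$, and then bounds the right side from below by means of the explicit kernel representations \eqref{fc}--\eqref{fc3}. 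The crucial point is that the commutator-type quantities $\J f$ and $\K f$ are integrals of $\{f(x)-f(x-s)\}^2$ and $\{f(x)-f(x-s)\}^3$ against the positive, decreasing kernel $-\beta_{kh}'$; restricting these integrals to $[\pi/2,\pi]$ and using the monotonicity of $f$ produces the three signed terms $I+II+III$, each carrying an explicit factor $\beta_{kh}(\pi/2)$, and hence the quadratic inequality $A\{f(\pi)-f(0)\}^2+12\{f(\pi)-f(0)\}<24\pi/\beta_{kh}(\pi/2)$ with the exact constants in the statement.

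Your sketch reaches the correct target (a quadratic inequality in $v(0)-v(\pi)$, the algebra $\sqrt{a+b}-\sqrt a\le b/(2\sqrt a)$ to remove the $\Upsilon$-dependence, and the kernel bound $\beta_{kh}(\pi/2)\ge(\pi-2)/\pi$ are all right), but the mechanism you offer cannot produce it. The Hopf boundary point lemma applied to $R$ yields only the qualitative statement $\partial_n R>0$ on $\mcs$, with a constant depending on the unknown geometry of $\Omega$; it gives no explicit upper bound on $|\psi_Y|$ at the trough, which is what $2g(v(0)-v(\pi))\le\psi_Y^2(\mathrm{trough})$ would require. At the two places where you would need to make this quantitative you defer to ``the mechanism already used in Section 4 to control $v(0)-v(\pi)$'' --- which is precisely the theorem being proved --- and to ``the quadratic head-amplitude relation in part (iv) of Theorem \ref{mainthm}'', which in the paper is a \emph{consequence} of Theorem \ref{bound}. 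Both appeals are circular. To close the argument you would need an independent quantitative substitute for the Hopf lemma, and the only one visible in this problem is the explicit $\beta_{kh}$-kernel analysis of the nonlocal equation, i.e.\ the paper's proof.
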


In order to prove this theorem,  it is convenient to associate,
 to any solution on the global bifurcation curve ${\mathcal K}_-$, the function
\begin{equation}\label{f}
f(x)=\frac{k}{2g}\,\Big(Q-2g\,v(x)\Big)\,\quad\text{for all }x\in\R,
\end{equation}
which  is smooth, even and $2\pi$-periodic on $\R$.
Note that $[v]=h$ yields
\begin{gather}
 [f]=\frac{k(Q-2gh)}{2g} \,,\label{f5}\\
[v^2]=\frac{Qh}{g}- \frac{Q^2}{4g^2} + \frac{[f^2]}{k^2}\,.\label{f6}
\end{gather}
Due to Theorem \ref{glbp}(iv), \eqref{nmbv}, \eqref{graph} and \eqref{aza0}, respectively, we have
 \begin{gather}
 f(x)>0 \quad\text{for all } x\in \R,\label{f1}\\
 f'(x)>0\quad\text{for all } x\in \left(0,\pi\right),\label{f2}\\
 (\mcc_{kh}(f'))(x) <1\quad\text{for all } x\in \R,\label{f3}\\
 \frac{m}{h} + \frac{\Upsilon  Q^2}{8hg^2}- \frac{\Upsilon }{2k^2h}\,[f^2]
 - \frac{\Upsilon }{k}\,f + \frac{\Upsilon }{k} \Big\{ f \mcc_{kh}(f') - \mcc_{kh}(ff')\Big\} < 0 \quad\text{on } \R. \label{f4}
\end{gather}
Writing
\eqref{vara} in terms of $f$, after multiplication by $\frac{k^2}{2g}$ we obtain the equation
\begin{eqnarray}
\qquad f + (aA + B)f - \tfrac{A}{2}\,f^2 &=&\{ f\mcc_{kh}f' + \mcc_{kh}(ff')\} + b    \label{ef} \\
 &&- \tfrac{A}{2}\,\{ f^2\mcc_{kh}f' + \mcc_{kh}(f^2f') -2f\,\mcc_{kh}(ff')\} \,   \nonumber
\end{eqnarray}
on $\R$, where $A$, $B$, $a$ and $b$ denote the constants
\begin{eqnarray}
\qquad A &=& \frac{\Upsilon ^2}{kg} \ge 0 \,,\hskip 1cm
a=\frac{k}{2g}\,(Q-2gh)\,,\label{A}\\
\quad B &=& \frac{\Upsilon }{g} \Big\{ \frac{m}{h} +\Upsilon  h- \frac{\Upsilon [v^2]}{2h}\Big\}=
\frac{\Upsilon }{g} \Big\{ \frac{m}{h} +\Upsilon  h- \frac{\Upsilon  Q}{2g}
+ \frac{\Upsilon  Q^2}{8hg^2} - \frac{\Upsilon [f^2]}{2k^2h}\Big\}\,\label{B}\,,\\
\quad b &=& \frac{kQ}{2g} - \frac{\Upsilon  k m}{g} - kh - [f \mcc_{kh}(f')]
+ \frac{\Upsilon  k^2 Q}{2g^2}\,B  + \frac{\Upsilon ^2 k Q^2}{8g^3} \label{beta}\,.
\end{eqnarray}
Note that, multiplying  \eqref{f4} by ${\Upsilon }/{g}$, and using the fundamental assumption that $\Upsilon  \ge 0$,  we obtain
\begin{equation}\label{f44}
aA +B - Af + A \{ f \mcc_{kh}(f') - \mcc_{kh}(ff')\} \le 0 \quad\text{on}\quad \R.
\end{equation}

Our approach relies on some structure-exploiting integral representations of the
cubic and quadratic terms in \eqref{ef}, as an effective
tool to obtain estimates. Let us recall from Appendix A in \cite{CSV} that, for
any smooth $2\pi$-periodic function $F: {\mathbb R} \to {\mathbb R}$ with mean zero over each period,
we have
\begin{equation}\label{fc0}
(\mcc_{kh}(F))(x)=\frac{1}{2\pi} \text{PV} \int_{-\pi}^\pi \beta_{kh}(x-s)F(s)\,ds\,,\qquad x \in \R\,,
\end{equation}
where (with $d=kh$) the kernel $\beta_{d}: \R \setminus 2\pi {\mathbb Z}\to {\mathbb R}$,
is given by
\begin{align}  \label{betadef}
\beta_{d}(s) &= -\frac{s}{d}  +  \frac{\pi}{d}\coth\Big(\frac{\pi s}{2d}\Big)  +  \frac{\pi}{d} \sum_{n=1}^{\infty}
\frac{2\sinh(\frac{\pi s}{d})}  {\cosh(\frac{\pi s}{d})-\cosh(\frac{2\pi^2n }{d})}\\
&=  -\frac{s}{d} + \frac\pi{d}  \sum_{n=-\infty}^\infty  \left\{ \coth\left(\frac\pi{2d}(s-2\pi n)\right) + \text{sgn}(n) \right\}.\nonumber
\end{align}
It is $2\pi$-periodic, odd, and smooth on $\bdr \setminus 2\pi{\mathbb Z}$.
					The function
$\beta_{kh}(s)-\frac{\pi}{kh}\coth\Big(\frac{\pi s}{2kh}\Big)$   is continuous at $s=0$.
Although an explicit representation of the kernel $\beta_{kh}$ in terms of three
Jacobi elliptic functions is provided in \cite{AFSS}, our series representation has the advantage that its term-by-term
differentiation reveals that $\beta_{kh}$ is strictly decreasing from $+\infty$ to $-\infty$ on $(0,2\pi)$.
This is an important property that is not obvious from the explicit closed-form representation.
In our proof we will need the following property of $\beta_d$.

\begin{lemma}  For all $d>0$,
$\beta_d(s)$ is a positive function of $s\in (0,\pi]$ strictly decreasing from $+\infty$ to $0$.  Furthermore,
\be \label{strpos}
 \beta_d \left(\pi/2\right)\geq \frac{\pi-2}{\pi}\quad\text{for all }d\in (0,\infty).    \ee
\end{lemma}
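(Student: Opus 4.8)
\emph{Plan.} The plan is to treat the two assertions separately, since they rest on quite different facts. The monotonicity/positivity part is almost immediate from what is already recorded in the text. Indeed $\beta_d$ is $2\pi$-periodic, odd, smooth on $\R\setminus 2\pi\Z$, and — by term‑by‑term differentiation of the series \eqref{betadef} — strictly decreasing from $+\infty$ to $-\infty$ on $(0,2\pi)$. Oddness together with $2\pi$-periodicity forces $\beta_d(\pi)=\beta_d(\pi-2\pi)=\beta_d(-\pi)=-\beta_d(\pi)$, hence $\beta_d(\pi)=0$. Since $(0,\pi]\subset(0,2\pi)$, the function is strictly decreasing there, with $\lim_{s\downarrow0}\beta_d(s)=+\infty$ and $\beta_d(\pi)=0$; in particular $\beta_d>0$ on $(0,\pi)$. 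No computation is needed for the first sentence of the lemma.

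For the quantitative bound \eqref{strpos} I would compare $\beta_d$ with the deep‑water (infinite‑strip) kernel $\cot(s/2)$. Regrouping the lattice sum in the second form of \eqref{betadef} according to the vertical index — using the Mittag–Leffler expansions $\coth z=\sum_{k\in\Z}(z-ik\pi)^{-1}$ and $\tfrac12\cot(w/2)=\sum_{k\in\Z}(w-2k\pi)^{-1}$ together with the identity $\cot(\alpha-i\gamma)+\cot(\alpha+i\gamma)=\dfrac{2\sin2\alpha}{\cosh2\gamma-\cos2\alpha}$ — yields the absolutely convergent decomposition
\[
\beta_d(s)=\cot(s/2)+\sum_{k=1}^{\infty}\frac{2\sin s}{\cosh(2kd)-\cos s},\qquad s\in(0,2\pi).
\]
(Equivalently this is the Fourier‑series identity $\beta_d(s)=2\sum_{n\ge1}\coth(nd)\sin(ns)=\cot(s/2)+4\sum_{n\ge1}(e^{2nd}-1)^{-1}\sin(ns)$, which follows from the multiplier description of $\mcc_{kh}$ in the Appendix.) For $s\in(0,\pi]$ every term of the series is nonnegative, since $\cosh(2kd)>1\ge\cos s$ and $\sin s\ge0$ there; hence $\beta_d(s)\ge\cot(s/2)$ on $(0,\pi]$ for every $d>0$. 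Evaluating at $s=\pi/2$ gives $\beta_d(\pi/2)\ge\cot(\pi/4)=1$, which is in fact strictly stronger than the claimed bound $\tfrac{\pi-2}{\pi}<1$.

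I expect the only real obstacle to be the bookkeeping in the regrouping step: the defining series for $\beta_d$, like that for $\cot(s/2)$, converges only conditionally (in the principal‑value sense), so one must verify that rearranging the double lattice sum is legitimate. The clean way is to work throughout with the symmetric partial sums in each of the two indices — for which all interchanges are absolutely convergent — and to pass to the limit only at the end; the counterterms $\operatorname{sgn}(n)$ appearing in \eqref{betadef} (and the matching $(m\pi)^{-1}$ counterterms in the Mittag–Leffler formula for $\cot$) are precisely what make this symmetrization converge. If one prefers to sidestep the lattice rearrangement entirely, the alternative is to argue directly from $\beta_d(s)=2\sum_{n\ge1}\coth(nd)\sin(ns)$ by writing $\coth(nd)=1+2(e^{2nd}-1)^{-1}$: the ``$1$''‑part is the Abel‑summable series for $\cot(s/2)$, while the remainder $4\sum_{n\ge1}(e^{2nd}-1)^{-1}\sin(ns)$ converges absolutely and, at $s=\pi/2$, is an alternating series with strictly decreasing positive terms, hence positive; this again yields $\beta_d(\pi/2)>\cot(\pi/4)=1>\tfrac{\pi-2}{\pi}$.
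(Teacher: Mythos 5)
Your proof of the first assertion is the same as the paper's (term-by-term differentiation of the series for strict monotonicity, oddness plus periodicity for $\beta_d(\pi)=0$), so there is nothing to add there. For the bound \eqref{strpos}, however, you take a genuinely different and in fact stronger route. The paper stays with the lattice-sum form of \eqref{betadef}: writing $x=\pi^2/2d$, it bounds each term $\frac{x\sinh x}{\cosh(4nx)-\cosh x}$ above by $\frac{2}{16n^2-1}$ through a coefficient-by-coefficient comparison of power series via Bernoulli's inequality, sums the resulting constants using the partial-fraction expansion of $\cot$ at $s=\pi/2$, and finishes with $\coth y\ge 1$, $y\coth y\ge 1$, arriving exactly at $(\pi-2)/\pi$. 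You instead pass to the Fourier side, $\beta_d(s)=2\sum_{n\ge1}\coth(nd)\sin(ns)$, split $\coth(nd)=1+2(e^{2nd}-1)^{-1}$, and resum into $\beta_d(s)=\cot(s/2)+\sum_{k\ge1}\frac{2\sin s}{\cosh(2kd)-\cos s}$, whose correction terms are manifestly nonnegative on $(0,\pi]$; I checked this identity numerically at two values of $d$ and it is correct (it is the method-of-images decomposition of the strip kernel, dual to the lattice sum in \eqref{betadef}). This yields $\beta_d(\pi/2)\ge\cot(\pi/4)=1$, which not only implies \eqref{strpos} but settles affirmatively the conjecture $\beta_d(\pi/2)\ge 1$ that the paper explicitly leaves open in the Remark following the lemma — a nontrivial improvement. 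Two small points to nail down in a final write-up: (a) the claim that the Fourier sine coefficients of $\beta_d$ equal $2\coth(nd)$ should be tied to \eqref{fc0} together with \eqref{hfs}, as you indicate; (b) since $\beta_d(s)\sim 2/s$ near $s=0$ the kernel is not integrable and $\sum_n\sin(ns)$ diverges, so the identification of the ``$1$''-part with $\cot(s/2)$ must go through Abel means (or symmetric partial sums), exactly as you flag; both of your proposed remedies are standard and work. Subject to writing out those routine justifications, the argument is complete; the paper's proof buys an entirely elementary, self-contained estimate from the given series, while yours buys a sharper constant and a structural identity for $\beta_d$.
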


\begin{proof}
From \eqref{betadef} it is clear that the series converges uniformly in $[\delta, 2\pi-\delta]$
for all $\delta\in (0,\pi]$, so that
$s\mapsto \beta_d(s)$ is continuous there.  Furthermore, term-by-term differentiation shows that
$s\mapsto \beta_d(s)$ is strictly decreasing on $(0,2\pi)$.
The oddness and $2\pi$-periodicity imply that $\beta_d(\pi)=0$, while the fact that $\lim_{s\downarrow 0} \beta_d(s)=\infty$ is immediate.

 For convenience in proving \eqref{strpos}, let us write $x=\pi^2/2d$ and consider $\beta_d(\pi/2)$
 as a function of $x\in(0,\infty)$.  Then
$$
\pi\beta_d(\pi/2)  =  2x\coth(x/2) -x  -  4x\sinh(x) \sum_{n=1}^\infty \frac1{\cosh(4n x)-\cosh(x)}.  $$

We claim that, for any $x\in (0,\infty)$ and any $\theta\in (1,\infty)$, we have
\[0<\frac{x\sinh(x)}{\cosh(\theta x)-\cosh(x)}\leq \frac{2}{\theta^2-1}.\]
To that aim, we examine the power series expansions of the numerator and denominator in the left side. Note that
\[x\sinh(x)=\sum_{k=1}^{\infty}\frac{1}{(2k-1)!}x^{2k}\quad\text{for all }x\in \R,\]
\[\cosh(\theta x)-\cosh(x)=\sum_{k=1}^\infty \frac{1}{(2k)!}(\theta^{2k}-1)x^{2k}\quad\text{for all }x\in \R.\]
The required result is obtained by simply comparing the coefficients of each even power of $x\in\R$, using the Bernoulli inequality:
\[\theta^{2k}-1= (1+(\theta^2-1))^k-1\geq k(\theta^2-1)\]
for each $\theta\in (1,\infty)$ and $k\in\N$.

It therefore follows that
\[\pi\beta_d(\pi/2)  \geq   2x\coth(x/2) -x  -8\sum_{n=1}^{\infty}\frac{1}{16n^2-1}.\]

We now use the fact that
\[4-8\sum_{n=1}^{\infty}\frac{1}{16n^2-1}=\pi,\]
which is obtained by taking $s=\pi/2$ in the well-known identity (see Section 5.2.1 in \cite{Ahl})
\[\frac{2}{s}+\sum_{n=1}^\infty \frac{4s}{s^2- (2\pi n)^2}=\cot\left(\frac{s}{2}\right)\quad\text{for all }s\in (0,2\pi),\]
to deduce that
\[\pi\beta_d(\pi/2)  \geq   2x\coth(x/2) -x  -(4-\pi).\]
Taking also into account the obvious inequalities
\[ \coth y\geq 1 \text{ and } y\coth y\geq 1\quad\text{for all }y\in (0,\infty),\]
with $y=x/2$, we obtain
\begin{align*}\pi\beta_d(\pi/2)  &\geq   x\coth(x/2) + 2\frac{x}{2}\coth(x/2)-x  -(4-\pi)\\
&\geq x+2 -x+\pi-4=\pi-2,
\end{align*}
as required.
\end{proof}

{\bf Remark} {\it Since letting $d\to \infty$ is equivalent to letting $x\to 0$ in the formula
$$
\pi\beta_d(\pi/2)  =  2x\coth(x/2) -x  -  4x\sinh(x) \sum_{n=1}^\infty \frac1{\cosh(4n x)-\cosh(x)},  $$
we get
\[\pi\lim_{d\to \infty}\beta_d(\pi/2)=4-8\sum_{k=1}^{\infty}\frac{1}{16n^2-1}=\pi\,.\]
It is an interesting conjecture whether actually
\[\beta_d(\pi/2)\geq 1\quad\text{for all }d\in (0,\infty)\,.\]
Numerical computation in Octave/Matlab suggests that the issue is quite subtle.}$\hfill\Box$\bigskip

A very useful consequence of \eqref{fc0} is that
\begin{equation}\label{fc}
(\mcc_{kh}(F'))(x)=-\frac{1}{2\pi} \text{PV} \int_{-\pi}^\pi \beta_{kh}'(s)\Big(F(x)-F(x-s)\Big)\,ds\,,\qquad x \in \R\,,
\end{equation}
for any smooth $2\pi$-periodic function $F: {\mathbb R} \to {\mathbb R}$. From \eqref{fc} we infer that
\begin{eqnarray}
&& (\mcc_{kh}(ff'))(x) = \text{PV} \int_{-\pi}^\pi \frac{-\beta_{kh}'(s)}{4\pi} \,\{ f^2(x)-f^2(x-s)\}\,ds\,, \label{fc2} \\
&& \J f(x) := \Big( f\,\mcc_{kh}f' - \mcc_{kh}(ff')\Big)(x) = \int_{-\pi}^\pi \frac{-\beta_{kh}'(s)}{4\pi} \,\{f(x)-f(x-s)\}^2\,ds\,,\label{fc21} \\
&& \K f(x) := \Big(f^2\,\mcc_{kh}f' + \mcc_{kh}(f^2f')- 2f\,\mcc_{kh}(ff')\Big)(x)  \label{fc3} \\
&&\hskip 2.2cm = \int_{-\pi}^\pi \frac{-\beta_{kh}'(s)}{6\pi} \,\{f(x)-f(x-s)\}^3\,ds\,  \nonumber
\end{eqnarray}
for all $x \in \R$.

\begin{proof}[Proof of Theorem \ref{bound}] Our basic equation \eqref{ef}  implies, by subtraction, that
\begin{align}
\Big( f + (aA + B)f &- \frac{A}{2}\,f^2 \Big) \bigg|_0^\pi  \label{ef1}\\
&= (f\mcc_{kh}f') \bigg|_0^\pi + \mcc_{kh}(ff')   \bigg|_0^\pi   - \frac{A}{2}\,  \K f    \bigg|_0^\pi   \,. \nonumber
\end{align}
Since the left side of the above relation can be written as
\[(f(\pi)-f(0))\left(1+aA+B-\frac{A}{2}(f(\pi)+f(0))\right),\]
in order to take advantage of (\ref{f44}) we add to both sides of (\ref{ef1}) the term $\frac{A}{2}\,(f(\pi)-f(0))(\J f(\pi) + \J f(0))$, obtaining:
\begin{align}\label{ef111}
 &(f(\pi)-f(0)) \, \Big( 1+ aA + B - \frac{A}{2}(f(\pi)+f(0)) +\frac{A}{2}\, (\J f(\pi) + \J f(0))  \Big) \\
&= (f\mcc_{kh}f') \bigg|_0^\pi + \mcc_{kh}(ff')   \bigg|_0^\pi   - \frac{A}{2}\,  \K f    \bigg|_0^\pi  + \frac{A}{2}\,(f(\pi)-f(0))(\J f(\pi) + \J f(0)).
\nonumber
\end{align}
Let us denote by ${\mathcal L}f$ and ${\mathcal R}f$, respectively, the left side and the right side of the above relation:
\[\L f:=(f(\pi)-f(0)) \, \Big( 1+ aA + B - A \,\frac{f(\pi)+f(0)}{2} +\frac{A}{2}\, (\J f(\pi) + \J f(0))  \Big),\]
\[\mcR f:=(f\mcc_{kh}f') \bigg|_0^\pi + \mcc_{kh}(ff')   \bigg|_0^\pi   - \frac{A}{2}\,  \K f    \bigg|_0^\pi  + \frac{A}{2}\,(f(\pi)-f(0))(\J f(\pi) + \J f(0)).\]
In what follows, we shall estimate $\L f$ from above and $\mcR f$ from below.

Observe first that, by evaluating inequality (\ref{f44}) at $x=\pi$ and at $x=0$ and taking the average, we obtain
\begin{equation} \label{f45}
aA + B - \frac{A}{2} (f(\pi)+f(0)) + \frac{A}{2}(\J f(\pi) + \J f(0)) \le 0.
\end{equation}
This is the key place where the assumption $\Upsilon\ge0$ is used.  It implies that
\be {\mathcal L}f\leq f(\pi)-f(0).\label{lebo}\ee
The burden of the proof of Theorem \ref{bound} is carried by the following lemma,
which provides a lower bound for $\mcR f$ under very general assumptions on the function $f$.
 In particular, it is no longer assumed that $f$ satisfies any equation.

\begin{lemma}\label{reb} For any smooth $2\pi$-periodic even function $f$ satisfying {\rm (\ref{f1})} and {\rm (\ref{f2})}, we have
\begin{align}    \mcR f \ge \{f(\pi)-f(0)\}\Bigg( \frac{\beta_{kh}(\frac{\pi}{2})}{\pi}\, f(0) &+ \frac{\beta_{kh}(\frac{\pi}{2})}{2\pi}\,\{f(\pi) + f(0)\}\nonumber\\
 & + A\,\frac{\beta_{kh}(\frac{\pi}{2})}{24\pi}\,\{f(\pi)-f(0)\}^2\Bigg).\label{fi}
  \end{align}
\end{lemma}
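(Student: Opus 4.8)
The plan is to rewrite $\mcR f$ as a single integral over $(0,\pi)$ of $f'$ against the kernel $\beta_{kh}$, to watch the quadratic and cubic ($A$-)corrections collapse into one symmetric term, and then to estimate $\beta_{kh}$ pointwise and change variables. First I would evaluate $\mcc_{kh}(ff')$, $\mcc_{kh}f'$, $\J f$ and $\K f$ at $x=0$ and at $x=\pi$ using the kernel representations \eqref{fc}--\eqref{fc3}. Since $f$ is smooth, even and $2\pi$-periodic, it satisfies $f(\pi+s)=f(\pi-s)$ and $f'(0)=f'(\pi)=0$, and $\beta_{kh}'$ is even; hence the integrand in each of \eqref{fc}--\eqref{fc3} is, both at $x=0$ and at $x=\pi$, an even function of $s$, so no principal value survives (the relevant difference of values of $f$ vanishes at least quadratically at $s=0$) and the integral reduces to one over $(0,\pi)$. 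Writing $-\beta_{kh}'(s)=\frac{d}{ds}\bigl(-\beta_{kh}(s)\bigr)$ in the $x=0$ integrals and, after the substitution $s\mapsto\pi-s$, $-\beta_{kh}'(\pi-t)=\frac{d}{dt}\beta_{kh}(\pi-t)$ in the $x=\pi$ integrals, an integration by parts whose boundary terms all vanish (because $\beta_{kh}(\pi)=0$ and $\beta_{kh}$ has only an $O(1/s)$ singularity at $s=0$) yields, for instance,
\[\mcc_{kh}(ff')\big|_0^\pi=\frac{1}{\pi}\int_0^\pi\bigl[\beta_{kh}(t)+\beta_{kh}(\pi-t)\bigr]\,f(t)f'(t)\,dt,\]
\[\bigl(f\,\mcc_{kh}f'\bigr)\big|_0^\pi=\frac{1}{\pi}\int_0^\pi f'(t)\bigl[f(0)\,\beta_{kh}(t)+f(\pi)\,\beta_{kh}(\pi-t)\bigr]\,dt,\]
and analogous formulas expressing $\K f\big|_0^\pi$ and $\J f(\pi)+\J f(0)$ as integrals of $f'(t)$ against $\beta_{kh}(t)$ and $\beta_{kh}(\pi-t)$ times $\{f(t)-f(0)\}^2$, $\{f(\pi)-f(t)\}^2$ (respectively $\{f(t)-f(0)\}$, $\{f(\pi)-f(t)\}$).

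Substituting these into the definition of $\mcR f$ and writing $p=f(0)$, $q=f(\pi)$, $F=f(t)$, the decisive algebraic point is that the two $\tfrac{A}{2}$-contributions — one from $-\tfrac{A}{2}\K f\big|_0^\pi$ and one from $\tfrac{A}{2}(q-p)\bigl(\J f(\pi)+\J f(0)\bigr)$ — recombine, via the identities $-(F-p)^2+(q-p)(F-p)=(F-p)(q-F)$ and $-(q-F)^2+(q-p)(q-F)=(q-F)(F-p)$, into the \emph{same} symmetric quantity $\tfrac{A}{2}(F-p)(q-F)$ attached to each of $\beta_{kh}(t)$ and $\beta_{kh}(\pi-t)$, so that
\[\mcR f=\frac{1}{\pi}\int_0^\pi f'(t)\bigl\{\beta_{kh}(t)\,S(t)+\beta_{kh}(\pi-t)\,L(t)\bigr\}\,dt,\]
with $S(t)=F+p+\tfrac{A}{2}(F-p)(q-F)$ and $L(t)=F+q+\tfrac{A}{2}(F-p)(q-F)$. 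By \eqref{f1} and \eqref{f2} one has $0<p\le F\le q$ on $[0,\pi]$, hence, as $A\ge0$, $0<S(t)\le L(t)$ there.

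Finally I would use that, by the preceding lemma, $\beta_{kh}$ is positive and strictly decreasing on $(0,\pi]$: for every $t\in(0,\pi)$ at least one of $\beta_{kh}(t)$, $\beta_{kh}(\pi-t)$ is $\ge\beta_{kh}(\pi/2)$ (according as $t\le\pi/2$ or $t\ge\pi/2$), while both are nonnegative; together with $0\le S(t)\le L(t)$ this gives the pointwise bound $\beta_{kh}(t)S(t)+\beta_{kh}(\pi-t)L(t)\ge\beta_{kh}(\pi/2)\,S(t)$. Hence
\[\mcR f\ge\frac{\beta_{kh}(\pi/2)}{\pi}\int_0^\pi f'(t)\,S(t)\,dt=\frac{\beta_{kh}(\pi/2)}{\pi}\int_p^q\Bigl(F+p+\tfrac{A}{2}(F-p)(q-F)\Bigr)\,dF,\]
the last equality being the substitution $F=f(t)$, legitimate since $f$ is strictly increasing on $(0,\pi)$ by \eqref{f2}. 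Using $\int_p^q(F+p)\,dF=\tfrac{1}{2}(q-p)(q+3p)$ and $\int_p^q(F-p)(q-F)\,dF=\tfrac{1}{6}(q-p)^3$ one obtains $\mcR f\ge\frac{\beta_{kh}(\pi/2)}{2\pi}(q-p)(q+3p)+\frac{A\,\beta_{kh}(\pi/2)}{12\pi}(q-p)^3$, which is precisely \eqref{fi} — in fact with the sharper constant $\tfrac{1}{12}$ in place of $\tfrac{1}{24}$ in the cubic term — so \eqref{fi} follows a fortiori.

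I expect the main obstacle to be the bookkeeping in the first two steps: carrying out the symmetry reductions without error, confirming that neither a principal value nor a boundary term survives the integrations by parts, and, above all, checking that the $A$-dependent terms genuinely collapse to the single symmetric correction $\tfrac{A}{2}(F-p)(q-F)$ in both brackets, since it is exactly that cancellation which makes the final bound clean and independent of the intermediate value $f(\pi/2)$. Once the representation of $\mcR f$ displayed above is established, the concluding estimate is short and entirely elementary.
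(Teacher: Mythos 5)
Your argument is correct, and it takes a genuinely different route from the paper's. The paper never integrates by parts: it keeps $\mcR f$ in the kernel-derivative form as four integrals against $-\beta_{kh}'(s)\ge 0$, lower-bounds the two $A$-dependent integrals by $\tfrac{A}{6}\{f(\pi)-f(0)\}(\J f(\pi)+\J f(0))$ using the inequality $(q-p)\mp\tfrac23\{\cdots\}\ge\tfrac13(q-p)$ (which costs a factor of $2$ in the cubic term), and then estimates each of the three resulting pieces by restricting the $s$-integration to $[\pi/2,\pi]$, where $\int_{\pi/2}^{\pi}(-\beta_{kh}'(s))\,ds=\beta_{kh}(\pi/2)$. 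You instead pass to an \emph{exact} representation of $\mcR f$ against $\beta_{kh}(t)$, $\beta_{kh}(\pi-t)$ and $f'(t)$; I verified that the integrations by parts are legitimate (the boundary terms vanish because $\beta_{kh}(\pi)=0$, $\beta_{kh}(s)=O(1/s)$, and the relevant differences of $f$-values are $O(s^2)$ at $x=0,\pi$ since $f'(0)=f'(\pi)=0$), that your displayed formulas for $\mcc_{kh}(ff')\big|_0^\pi$ and $(f\,\mcc_{kh}f')\big|_0^\pi$ are right, and that the two $A$-contributions do collapse identically, via $-(F-p)^2+(q-p)(F-p)=(F-p)(q-F)=-(q-F)^2+(q-p)(q-F)$, into the symmetric correction $\tfrac{A}{2}(F-p)(q-F)$ attached to both kernels. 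Your pointwise bound $\beta_{kh}(t)S+\beta_{kh}(\pi-t)L\ge\beta_{kh}(\pi/2)S$ (valid since one of $\beta_{kh}(t),\beta_{kh}(\pi-t)$ is $\ge\beta_{kh}(\pi/2)$, both are nonnegative, and $0\le S\le L$) together with the substitution $F=f(t)$ then reproduces the quadratic part of \eqref{fi} exactly and gives the cubic part with constant $\tfrac{1}{12}$ instead of $\tfrac{1}{24}$, so \eqref{fi} follows a fortiori from $A\ge 0$. What your route buys is an identity for $\mcR f$ in place of a chain of lower bounds, a sharper cubic coefficient, and a final step that is just two elementary integrals in $F$; what the paper's route buys is that it works directly from the already-established representations \eqref{fc2}--\eqref{fc3} without having to justify integration by parts against the singular kernel.
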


Taking for granted for the moment the validity of Lemma \ref{reb} (the proof of which will be given later), we proceed with the proof of Theorem \ref{bound}. Thus, by combining (\ref{lebo}) and (\ref{fi}), we obtain
\begin{equation}\label{Fi}
 \frac{24\pi}{\beta_{kh}(\frac{\pi}{2})} \ge 24 f(0) + 12\{f(\pi)+ f(0)\} + A\,\{f(\pi)-f(0)\}^2 \,,
\end{equation}
and therefore
\begin{equation}\label{Fi2}
 A\,\{f(\pi)-f(0)\}^2 + 12\{f(\pi)- f(0)\} -  \frac{24\pi}{\beta_{kh}(\frac{\pi}{2})} <0\,,
\end{equation}
since $f(\pi)>f(0)>0$ by \eqref{f1}-\eqref{f2}. Recalling \eqref{A}, from \eqref{Fi2} we get
\begin{equation}\label{Fiiv}
f(\pi)-f(0) \le
 \sqrt{\frac{36g^2k^2}{\Upsilon^4}+\frac{24\pi gk}{\Upsilon^2\beta_{hk}(\tfrac{\pi}{2})}} -\frac{6gk}{\Upsilon^2}  < \frac{2\pi}{\beta_{kh}(\tfrac{\pi}{2})}\quad\text{if}\Upsilon>0,
\end{equation} and
\be f(\pi)-f(0) \le \frac{2\pi}{\beta_{kh}(\tfrac{\pi}{2})}\quad\text{if}\Upsilon=0.\ee
 The required result now follows taking into account that
\[v(0)-v(\pi) = \frac1k(f(\pi)-f(0)),\]
as well as the estimate in Lemma 1. This completes the proof of Theorem \ref{bound}.
\end{proof}

It now remains to give the proof of Lemma \ref{reb}.

\begin{proof}[Proof of Lemma \ref{reb}]
It is a consequence of \eqref{fc} that
\begin{equation}\label{fc22}
 (f\mcc_{kh}f') \bigg|_0^\pi = \int_{-\pi}^\pi \frac{-\beta_{kh}'(s)}{2\pi} \,\{ f^2(\pi)-f(\pi)f(\pi-s)-f^2(0)+f(0)f(-s) \}\,ds   \,.
\end{equation}
Also, from \eqref{fc21} it follows that
\begin{equation} \label{ef2}
 \J f(\pi) + \J f(0) =
 \int_{-\pi}^\pi \frac{-\beta_{kh}'(s)}{4\pi} \,\Big(\{f(\pi)-f(\pi-s)\}^2 + \{f(0)-f(-s)\}^2 \Big)\,ds\,. 
\end{equation}
Using also \eqref{fc2}, \eqref{fc3} and \eqref{fc22}, we obtain the explicit representation
\begin{align*}
 \mcR f&= \int_{-\pi}^\pi \frac{-\beta_{kh}'(s)}{2\pi} \,\{ f^2(\pi)-f(\pi)f(\pi-s)-f^2(0)+f(0)f(-s) \}\,ds  \nonumber\\
    &+  \int_{-\pi}^\pi \frac{-\beta_{kh}'(s)}{4\pi} \,\{f^2(\pi)-f^2(\pi-s) +f^2(-s)- f^2(0)\}\,ds \nonumber\\
& + \frac{A}{2} \int_{-\pi}^\pi \frac{-\beta_{kh}'(s)}{4\pi} \,\{f(\pi)-f(\pi-s)\}^2 \Big(\{f(\pi)-f(0)\}  -
\frac{2\{f(\pi)-f(\pi-s)\}}{3} \Big)\,ds \nonumber \\
&  + \frac{A}{2} \int_{-\pi}^\pi \frac{-\beta_{kh}'(s)}{4\pi} \,\{f(0)-f(-s)\}^2 \Big(\{f(\pi)-f(0)\}  +
\frac{2\{f(0)-f(-s)\}}{3} \Big)\,ds. \nonumber\\
\end{align*}
Next, we use the facts that $-\beta_{kh}'(s) \ge0$ and that $f(x)$ is maximized at $\pi$ and minimized at $0$, to obtain that
\be \mcR f \ge I + II + III,\label{123}\ee
where
\begin{align*}
I  &=   \int_{-\pi}^\pi \frac{-\beta_{kh}'(s)}{2\pi} \,\{ f^2(\pi)-f(\pi)f(\pi-s)-f^2(0)+f(0)f(-s) \}\,ds,\\
II &=      \int_{-\pi}^\pi \frac{-\beta_{kh}'(s)}{4\pi} \,\{f^2(\pi)-f^2(\pi-s) +f^2(-s)- f^2(0)\}\,ds,\\
III &=     \frac{A}{2}\{f(\pi)-f(0)\} \int_{-\pi}^\pi \frac{-\beta_{kh}'(s)}{12\pi} \,\Big(\{f(\pi)-f(\pi-s)\}^2 + \{f(0)-f(-s)\}^2  \Big)\,ds \,.
\end{align*}

Our next steps take advantage of the symmetries
\begin{equation}\label{fr}
\beta_{kh}'(s)=\beta_{kh}'(-s)\,,\quad f(s)=f(-s)\,,\quad f(\pi-s)=f(s-\pi)=f(s+\pi)\,,
\end{equation}
granted since both $f$ and $\beta_{kh}'$ are even, $2\pi$-periodic functions.   We also note that
\begin{equation}\label{ib}
\beta_{kh}(\frac{\pi}{2}) > \beta_{kh}(\pi)=0\,,
\end{equation}
because the function $\beta_{kh}$ is odd, $2\pi$-periodic and strictly decreasing on $(0,2\pi)$.

We estimate each of the three terms $I, II$ and $III$ separately.  Note that
 \eqref{f1}-\eqref{f2} yield
\begin{eqnarray*}
&& f^2(\pi)-f(\pi)f(\pi-s)-f^2(0)+f(0)f(s) \\
&& =f(\pi)\{f(\pi)-f(\pi-s)\} + f(0)\{f(s)-f(0)\}\ge 0\quad\text{for}\quad 0 \le s \le \pi\,,
\end{eqnarray*}
Moreover,
for $\frac{\pi}{2} \le s \le \pi$ we have $f(\pi-s)\le f(\frac\pi 2) \le f(s) \le f(\pi)$ so that
\begin{eqnarray*}
&& f^2(\pi)-f(\pi)f(\pi-s)-f^2(0)+f(0)f(s)\\
&& \qquad \ge f^2(\pi)-f(\pi)f\Big(\frac{\pi}{2}\Big)-f^2(0)+f(0)f\Big(\frac{\pi}{2}\Big)  \\
&&\qquad =\{ f(\pi)-f(0)\} \Big\{ f(\pi)+f(0)-f\Big(\frac{\pi}{2}\Big)\Big\} \ge  f(0)\{ f(\pi)-f(0)\} \,.
\end{eqnarray*}
Thus we have the lower bound
\begin{align}
 I &= \int_{-\pi}^\pi \frac{-\beta_{kh}'(s)}{2\pi} \,\{ f^2(\pi)-f(\pi)f(\pi-s)-f^2(0)+f(0)f(-s) \}\,ds  \label{fi2}\\
&= \int_0^\pi \frac{-\beta_{kh}'(s)}{\pi} \,\{ f^2(\pi)-f(\pi)f(\pi-s)-f^2(0)+f(0)f(s) \}\,ds \nonumber \\
&\ge \int_{\pi/2}^\pi \frac{-\beta_{kh}'(s)}{\pi} \,\{ f^2(\pi)-f(\pi)f(\pi-s)-f^2(0)+f(0)f(s) \}\,ds \nonumber \\
&\ge \int_{\pi/2}^\pi \frac{-\beta_{kh}'(s)}{\pi} \, f(0)\{ f(\pi)-f(0)\}\,ds \nonumber
= \frac{\beta_{kh}(\frac{\pi}{2})}{\pi}\, f(0)\{ f(\pi)-f(0)\} \,,\nonumber
\end{align}

Similarly we have
\begin{align}
II &= \int_{-\pi}^\pi \frac{-\beta_{kh}'(s)}{4\pi} \,\{f^2(\pi)-f^2(\pi-s) +f^2(-s)- f^2(0)\}\,ds \label{fi3} \\
&= \int_0^\pi \frac{-\beta_{kh}'(s)}{2\pi} \,\{f^2(\pi)-f^2(\pi-s) +f^2(-s)- f^2(0)\}\,ds\nonumber \\
& \ge \int_{\pi/2} ^\pi \frac{-\beta_{kh}'(s)}{2\pi} \,\{f^2(\pi)-f^2(\pi-s) +f^2(-s)- f^2(0)\}\,ds\nonumber \\
& \ge \int_{\pi/2} ^\pi \frac{-\beta_{kh}'(s)}{2\pi} \,\{f^2(\pi)- f^2(0)\}\,ds =\frac{\beta_{kh}(\frac{\pi}{2})}{2\pi}\,\{f^2(\pi)- f^2(0)\}\,,\nonumber
\end{align}
and, in the case when $\Upsilon>0$,
\begin{align}
 \frac{2}{A\{f(\pi)-f(0)\}}  III &=  \int_{-\pi}^\pi \frac{-\beta_{kh}'(s)}{12\pi} \,\Big(\{f(\pi)-f(\pi-s)\}^2 + \{f(0)-f(s)\}^2 \Big)\,ds \nonumber \\
& = \int_0^\pi \frac{-\beta_{kh}'(s)}{6\pi} \,\Big(\{f(\pi)-f(\pi-s)\}^2 + \{f(0)-f(s)\}^2 \Big)\,ds \nonumber \\
& \ge \int_0^\pi \frac{-\beta_{kh}'(s)}{6\pi} \,\frac{\{f(\pi)-f(\pi-s) +f(s)-f(0)\}^2}{2}\,ds \label{fi4}\\
& \ge \int_{\pi/2}^\pi \frac{-\beta_{kh}'(s)}{6\pi} \,\frac{\{f(\pi)-f(\pi-s) +f(s)-f(0)\}^2}{2}\,ds \nonumber\\
&\ge \int_{\pi/2}^\pi \frac{-\beta_{kh}'(s)}{12\pi} \,\{f(\pi)-f(0)\}^2 \,ds=\frac{\beta_{kh}(\frac{\pi}{2})}{12\pi}\,\{f(\pi)-f(0)\}^2 \,. \nonumber
\end{align}
In the last line we used the fact by \eqref{f1}-\eqref{f2} that
\[
f(s) \ge f(\pi-s) >0 \quad\text{for}\quad \pi \ge s \ge \pi/2\,.
\]
The required result follows from \eqref{123} by combining (\ref{fi2}), (\ref{fi3}) and (\ref{fi4}).
\end{proof}


We are now in a good position to provide the proof of Theorem \ref{mainthm}.

\begin{proof}[Proof of Theorem \ref{mainthm}]
Part (i) was proven in Theorem \ref{gra}.
By \eqref{Fii},
\[ \sup_{\K_-}(\max_\mcs Y -\min_\mcs Y)= \sup_{\K_-} (v(\pi)-v(0)) <\infty.\]
This is Part (ii). Due to \eqref{strpos}, Part (iii) follows at once from \eqref{Fii}.

It remains to prove Part (iv).  We first prove that $m$ and $Q$ are uniformly bounded along $\K_-$.
Indeed, as a consequence of (\ref{fi}), we have that $f$ is uniformly bounded along $\K_-$.
It follows from \eqref{f5} that $Q$ is also bounded, and then, from (\ref{f}) and (\ref{A}),
that $v$ and $a$ are also uniformly bounded along $\K_-$.
Since, as an immediate consequence of (\ref{fc21}), $\J f\geq 0$ everywhere, it follows from (\ref{f44}) that
$B\leq Af-Aa$.  Hence $B$ is bounded above along $\K_-$.

We also observe that, in the notation of Lemma 2, we actually have the equality
\[III=\frac{A}{6} (f(\pi)-f(0))(\J f(\pi) + \J f(0)).\]
When combined with the inequalities $I\geq 0$, $II\geq 0$, this leads to
\[\mcR f\geq \frac{A}{6} (f(\pi)-f(0))(\J f(\pi) + \J f(0)).\]
It is thus a consequence of (\ref{lebo}) that
\[\frac{A}{2}(\J f(\pi) + \J f(0))\leq 3.\]
On the other hand, by the definition of $\mcl f$ and \eqref{fi}, we see that
\[1 + aA +B -\frac{A}{2}(f(\pi)+f(0))+ \frac A2 (\J f(\pi) + \J f(0)) \ge0.\]
Combining the last two inequalities, we have
\[B\geq -4 -aA +\frac{A}{2}(f(\pi)+f(0)) > -4 -aA .\]
Thus $B$ is also bounded away from $-\infty$ all along $\K_-$.

We now prove that the flux $m$ is also bounded along $\K_- $.
In case $\Upsilon >0$, we know from \eqref{B} that
\[m =  \frac{ghB}{\Upsilon} - \Upsilon h^2  + \frac {\Upsilon}2 [v^2]  \]
so that $m$ is bounded along $\K_- $.
In the other case $\Upsilon =0$, we  argue as follows. We write (\ref{add}) in the form
\[\frac{|m|}{kh} = (Q-2gv)^{1/2}\,\left((v')^2
+\Big(\frac{1}{k}+\mcc_{kh}(v')\Big)^2\right)^{1/2}  \le C  \left(|v'|+\Big|\frac{1}{k}+\mcc_{kh}(v')\Big|\right)
\]
on $\R$ for some constant $C$, because $Q$ and $f$ are uniformly bounded.
However by (\ref{nmbv}) and (\ref{graph}) we know that $v'<0$ on $(0.\pi)$
and $\frac{1}{k}+\mcc_{kh}(v')>0$ on $(-\pi,\pi)$.
Thus integrating the inequality on $(-\pi,\pi)$ and using the periodicity of $\mcc_{kh}(v')$,
we obtain the boundedness of $m$.

By Theorem \ref{gra}, alternative $(A_1)$ must necessarily occur in Theorem \ref{glbp}, which means that
 \eqref{coonc} holds. We have shown above that $Q$ and $m$ are bounded along ${\mathcal K}_-$.  Note that, by (\ref{nmbv}) and the evenness of $v$,  we have that
 \[v_s(0)=\max_{x\in\R} v_s(x) \quad\text{for all }s\in (0,\infty)\]
 and therefore we need to prove that
 \be Q-2gv_s(0)\to 0 \quad\text{as }s\uparrow\infty.\label{aws0}\ee
 {\it Arguing by contradiction},  we suppose that there exist $\delta>0$ and a sequence $s_j\uparrow \infty$ such that
 \be Q_j -2gv_{s_j}(x)\geq \delta\quad\text{for all }x\in\R.\label{aws}\ee
 For notational simplicity, we denote in what follows the function $v_{s_j}$ by $v_j$, for any $j\in\N$. We shall prove in what follows that the sequence $\{v_j\}_j$ is bounded in $C^{2,\alpha}_{2\pi}$, a fact which, combined with the previously proved bounds, contradicts the validity of \eqref{coonc}.

 To that aim, we rewrite (\ref{vara}) in the form
 \be
\begin{aligned}\label{aq1}
2(Q-2gv)\mcc_{kh}(v')&=-2g(v\mcc_{kh}(v')-\mcc_{kh}(vv'))\\
&\quad +\Upsilon^2(v^2 \mcc_{kh}(v')+\mcc_{kh}(v^2v')-2v\mcc_{kh}(vv'))\\
 &\quad +\frac{\Upsilon^2}{k}v^2 + 2\Upsilon \,v \,\Big( \frac{m}{kh} - \frac{\Upsilon }{2kh}\,[v^2]\Big)- 2\frac{\Upsilon m}{k} \\
& \quad +\frac{2g}{k}(v-h) - 2g\,[v\,\mcc_{kh}(v')]\\
&=: -2g \J v+ \Upsilon^2 \K v +{\mathcal T}(m,v)
\end{aligned}
\ee
with the operators $\J$ and $\K$ as defined in (\ref{fc21}) and (\ref{fc3}), while ${\mathcal T}(m,v)$ gathers all the remaining terms.
 We also rewrite the inequality (\ref{aza0}), valid along $\K_-$, in the form
 \be\label{aq2}
\Upsilon (v\mcc_{kh}(v')- \mcc_{kh}(vv')) <  -\frac{m}{kh} +\frac{\Upsilon }{2kh}\,[v^2]-\frac{\Upsilon}{k}v.
\ee

We shall make use of the following properties satisfied by the operator $\J$:
\begin{itemize}
\item[(a)] $\J$ maps bounded sets of the periodic Sobolev space $W^{1,2}_{2\pi}$ into bounded sets of $L^\infty_{2\pi}$;
\item[(b)] $\J$ maps bounded sets of $W^{1,p}_{2\pi}$, where $p\in (2,\infty)$, into bounded sets of $C^{1-\frac{2}{p}}_{2\pi}$;
\item[(c)] $\J$ maps bounded sets of $C^{n,\beta}_{2\pi}$ into bounded sets of $C^{
n,\alpha}_{2\pi}$, for any $n\in \N\cup\{0\}$, $\beta\in (0,1)$ and $\alpha\in (0,\beta)$.
\end{itemize}
The proof of these properties follows from the fact that $\mcc_{kh} = \C + \S $,
where $\C$ is the ordinary periodic Hilbert transform and $\S$ is a smoothing operator.
Therefore they are an easy adaptation of the arguments in \cite[Section 10.5]{BT}, where the case of the usual Hilbert transform (formally corresponding to $h=\infty$) is considered, in a similar way to \cite{CV, CSV}.

Observe also, from the explicit representation formulas (\ref{fc21}) and (\ref{fc3}), that we have the inequality
 \be |\K v(x)|\leq \frac{2}{3}(v(0)-v(\pi))|\J v(x)|\quad\text{for all }x\in \R, \label{ekj}\ee
 for any $2\pi$-periodic function that is maximized at $0$ and minimized at $\pi$.

 We prove first that the sequence $\{\J v_j\}_j$ of nonnegative functions is bounded in $L^\infty_{2\pi}$.
 In the case when $\Upsilon>0$, this is an immediate consequence of (\ref{aq2}).
 On the other hand, in the case when $\Upsilon=0$, it follows from (\ref{meana}), using (\ref{aws})
 and the boundedness of $\{m_j\}_j$, that $\{v_j\}_j$ is bounded in
 $W^{1,2}_{2\pi}$, from which the claimed result follows by property (a) above.
 It is now a consequence of (\ref{ekj}) that the sequence $\{\K v_j\}_j$ is also bounded in $L^\infty_{2\pi}$.

In the following we consider the general case $\Upsilon\geq 0$. It is a consequence of (\ref{aq1}),
using (\ref{aws}), that $\{\mcc_{kh} v_j'\}_j$ is bounded in $L^\infty_{2\pi}$, and hence in $L^p_{2\pi}$,
for any $p\in (2,\infty)$. Since the linear operator $\mcc_{kh}$ is bounded and invertible
from $L^p_{2\pi,\circ}$ onto itself (where the subscript $\circ$ is used to denote zero mean),
it follows that the sequence $\{v_j'\}_j$ is bounded in $L^p_{2\pi}$ for any $p\in (2,\infty)$.
Thus $\{v_j\}_j$ is bounded in $W^{1,p}_{2\pi}$ for any $p\in (2,\infty)$.

It is not difficult to prove that $\K$ satisfies the same property expressed by (b) for $\J$.
So $\{\J v_j\}_j$ and $\{\K v_j\}_j$ are bounded in $C^{0,\beta}_{2\pi}$ for any $\beta\in (0,1)$.
When used in (\ref{aq1}) together with (\ref{aws}) and property (b), this
leads to the conclusion that $\{\mcc_{kh} v_j'\}_j$ is bounded in $C^{0,\beta}_{2\pi}$, for any $\beta\in (0,1)$.
Since the linear operator $\mcc_{kh}$ is bounded and invertible from $C^{0,\beta}_{2\pi,\circ}$ onto itself,
it follows that $\{v_j'\}_j$ too is bounded in $C^{0,\beta}_{2\pi}$ for any $\beta\in (0,1)$.

Similarly, $\K$ satisfies the same property expressed by (c) for $\J$. As before, an examination of (\ref{aq1}),
using also (\ref{aws}),
leads to the conclusion that $\{\mcc_{kh} v_j'\}_j$ is bounded in $C^{1,\alpha}_{2\pi}$, for any $\alpha\in (0,1)$.
Since the linear operator $\mcc_{kh}$ is bounded and invertible from $C^{1,\alpha}_{2\pi,\circ}$ onto itself,
it follows that $\{v_j\}_j$ is bounded in $C^{2,\alpha}_{2\pi}$ for any $\alpha\in (0,1)$.
This contradicts (\ref{coonc}), proving therefore that (\ref{aws0}) holds, as required.

\end{proof}

\section{Physical interpretation}

In order to explain the physical relevance of our results, let us note that the prime
sources of currents are winds of long duration (see the discussion in \cite{Jon}).
In deep water these are near-surface shear flows
with about 75 m taken as the reference depth of the layer to which the wind effects are confined,
but in shallower regions a velocity profile can develop throughout the flow.
Approximately 7.5\% of the Earth's ocean floor consists of continental shelves that are practically flat,
with average depth about 60 m and average width of around 65 km.
In some places they are almost nonexistent, while in others,
as along the northern coast of Australia and Siberia, their width exceeds 1000 km; see \cite{Gar}.
A systematic collection of wind drift data has provided guidance for the velocities of wind-generated currents
as functions of the depth \cite{Ew}.  This leads to the current's velocity profile
\begin{equation}\label{wind}
\psi_Y(Y)=\left\{ \begin{array}{l}
 s\,\Big(1+ \displaystyle\frac{Y-h}{Y_0}\Big)\quad\hbox{for}\quad h-Y_0 \le Y \le h\,,\\[0.25 cm]
 0 \quad\hbox{for}\quad 0 \le Y \le h-Y_0\,,
 \end{array}\right.
 \end{equation}
where $Y_0>0$ is the reference depth of the current and where the magnitude $s$ of the surface wind-drift
corresponds to about   2\% of the wind velocity measured at 10 m above sea level.

  In our setting, with $h,\,k >0$ and $\Upsilon  \in {\mathbb R}$ fixed,
  the formulation \eqref{g} of the governing equations
  for waves propagating in the $X$-direction is in a frame of reference moving at the wave speed $c>0$,
  in which the wave is stationary.
  In this frame of reference the underlying current ${\frak U}(Y)$ is defined at every level $Y$ beneath the wave trough
as the average over a wavelength $L$ of the horizontal fluid velocity component,
$${\frak U}(Y)=\frac{1}{L} \int_X^{X+L} \psi_Y(X,Y)\,dX\,,\qquad 0 \le Y \le v(\pi)\,.$$
Using \eqref{g1} and the $L$-periodicity of $\psi$ in the $X$-variable,
we get ${\frak U}_Y=\Upsilon $, so that
\begin{equation}\label{uc}
{\frak U}(Y)={\frak U}_0 + \Upsilon  Y\,,\qquad 0 \le Y \le v(\pi)\,,
 \end{equation}
where ${\frak U}_0$ is the current velocity along the flat bed $Y=0$.
Note the resemblance to the formula \eqref{lb8} for
parallel shear flows. We highlight two types of wind waves propagating in a layer of water
less than 75 m deep, over a flat bed:
\begin{itemize}
\item {\it Downstream waves}
occur when the wind blows in the $X$-direction, generating a current \eqref{wind}-\eqref{uc}
with vorticity $\Upsilon =s/Y_0 >0$ and ${\frak U}_0 \ge 0$.  The resulting flow is called favorable.
\item {\it Upstream waves}
occur when the wind blows in the negative $X$-direction, in which case
the current \eqref{wind}-\eqref{uc} has vorticity $\Upsilon =s/Y_0 <0$ and ${\frak U}_0 \le 0$.
The resulting flow is called adverse.
\end{itemize}

In this context we now discuss the waves of small amplitude that correspond to solutions on the local bifurcation curve,
representing small perturbations of a suitable pure current (a horizontal flow with a flat free surface). The parallel shear flows with
velocity field \eqref{lb8} are labelled by $m$.
For precisely two values of $m$, specified in \eqref{lb7}, they
admit small perturbations in the form of waves with one crest and one trough per wavelength. At the bifurcation point
we have $v \equiv h$, so that \eqref{aza0} simplifies to
\begin{equation}
\pm \Big(\frac{m}{kh} + \frac{\Upsilon  h^2}{2kh} \Big) >0
\end{equation}
		and therefore
the $\pm$ sign in \eqref{aza0} corresponds to the choice of sign in \eqref{lb7}.
Due to \eqref{lamb}-\eqref{lb6},
at the bifurcation point the choice of $\pm$ in \eqref{lb7} amounts to imposing
an inequality $\lb = {\frak U}(h) \gtrless 0$ on the parallel sheer flow.
Now the propagation speed $c$ of wind waves is typically an order of magnitude
greater than the velocity of the surface wind-drift.
Indeed, for wave speeds in the range 10-20 m/s, surface current speeds
of 3 m/s are exceptionally high  \cite{Gar, tk}.
Taking this into account, it means that
$\lb={\frak U}(h) <0$.

  \begin{figure}[h]
  \centering
  \captionsetup{width=.475\linewidth}
  \subfloat[Downstream wave propagation in the physical frame of reference: co-flowing
  (favorable) wave and the underlying
  positively sheared current.]{\includegraphics[width=0.475\textwidth]{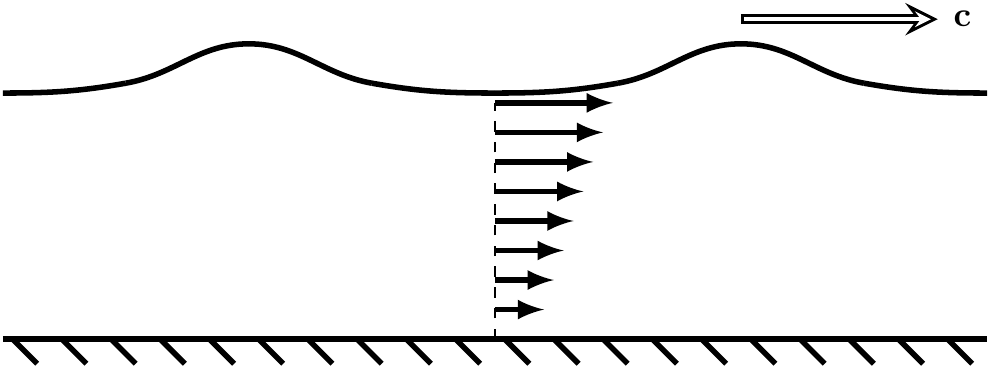}}
  \hfill
  \subfloat[The same flow in the frame of reference moving at the wave speed $c$.]{\includegraphics[width=0.475\textwidth]{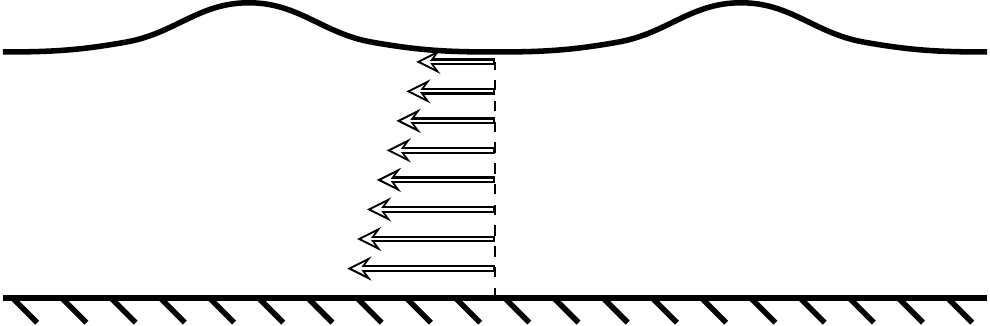}}
  \captionsetup{width=.97\linewidth}
  \caption{\footnotesize Sketches of the surface wave and of the linearly sheared velocity profile of
  the underlying favorable wind-generated current with positive vorticity:
  (A) In the fixed frame of reference the surface wave propagates
  at constant speed $c$, without change of shape, in the $X$-direction. (B) The wave is stationary in a coordinate system moving with the wave speed $c$.}
\end{figure}

Our results in Sections 3 and 4 are valid for favorable flows, which means the choice of the minus sign
in \eqref{aza0} and $\Upsilon  \ge 0$. At the bifurcation point,
$\Upsilon  \ge 0$ then ensures by means
of \eqref{lb8} that $\psi_Y <0$ throughout the pure current flow. Moreover, for the waves
that arise as small perturbations of this pure current state the inequality $\psi_Y <0$ throughout the flow will persist while
for waves of large amplitude along the curve ${\mathcal K}_-$ this inequality is
granted by Theorem \ref{glbp}; see Figure 2 for
the configurations (in the fixed frame and in the frame of reference moving at the wave speed)
of a current generated by wind blowing in the favorable direction,
in water with a flat bed located above the reference depth $Y_0$.

On the other hand, we briefly discuss the interpretation of a flow being adverse, which refers to the alternative choice of
sign in \eqref{aza0}, for solutions on the local bifurcation curve.
For wind-generated adverse currents we argued that ${\frak U}_0 \le 0$ and $\Upsilon  <0$, so that the
waves of small amplitude are perturbations of a pure current of the form \eqref{lb8} with
negative velocity at the surface in the moving frame, given by \eqref{lb6} with the minus sign.
Note that
$m_+^*(\Upsilon) = -m_-^*(-\Upsilon)$.
In a frame of reference at rest, the
current runs in the direction opposite to the waves because ${\frak U}(Y)  < {\frak U}_0 \le 0 $ for $Y>0$; see Figure 3.

 \begin{figure}[h]
  \centering
  \captionsetup{width=.475\linewidth}
  \subfloat[Upstream wave propagation in the physical frame of reference: contra-flowing (adverse) wave
  and the underlying negatively sheared current.]{\includegraphics[width=0.475\textwidth]{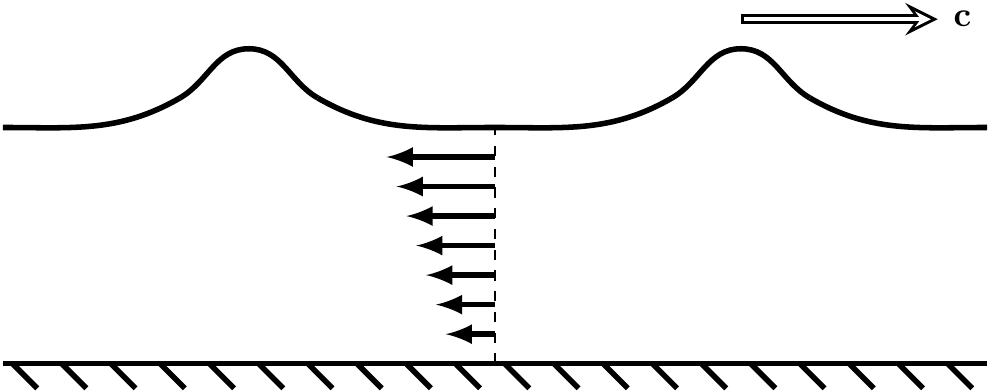}}
  \hfill
  \subfloat[The same flow in the frame of reference moving at the wave speed $c$.]{\includegraphics[width=0.475\textwidth]{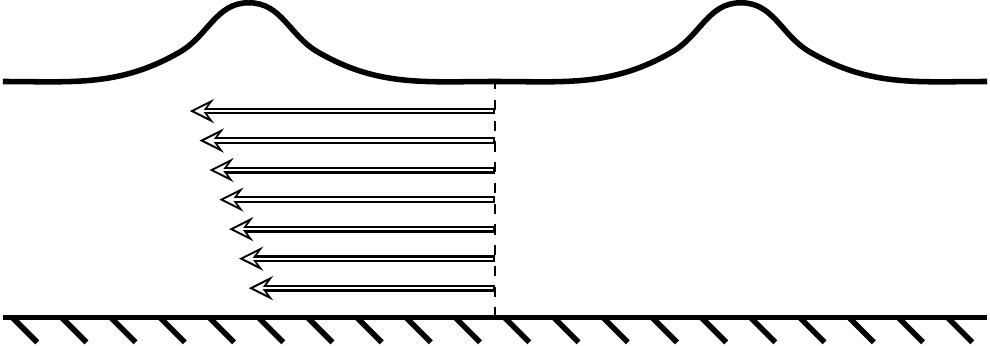}}
  \captionsetup{width=.97\linewidth}
  \caption{\footnotesize Sketches of the surface wave and of the linearly sheared velocity profile of
  the underlying adverse current with negative vorticity: (A) In the fixed frame
  of reference the wave propagates
  at constant speed $c$, without change of shape, in the $X$-direction, counter to the wind-generated current.
  (B) The wave is stationary in the coordinate system moving with the wave speed $c$.}
\end{figure}

Experimental data and numerical simulations \cite{CKS1, CKS2, DP, KS1, KS2, Mo, Rib, Swan, GPT, V} indicate
that the interaction of steady waves with linearly sheared currents can have a significant effect on the waves.
Numerical computations show that a
 favorable shear current typically increases the wavelength and decreases the wave steepness and amplitude,
with a surface wave profile that is markedly asymmetrical about the mean water level,
having long, flat regions near the trough.
On the other hand, an adverse current shortens the wavelength and increases the steepness,
leading to unusual shapes with narrow and peaked crests and overhanging bulbous waves \cite{V}.
In particular, as the wave height increases, vorticity can become a dominant feature in the flow dynamics.
The results presented in this paper can be regarded as
providing a confirmation of these observations for
waves interacting with a favorable current in a flow of constant vorticity over a flat bed.

\section{Appendix}

The purpose of this appendix is to provide some background information
about the reformulations \eqref{sys} and \eqref{add} of the governing equations \eqref{g}.

For any $d>0$, the Hilbert transform operator $\mcc_d$ associated to the strip
\[\mcr_{d}=\{(x,y) \in \R^2:\ -d < y < 0\}\]
is defined for $2\pi$-periodic functions $w \in C^{0,\alpha}_{2\pi,\circ}(\R)$ of zero mean, $[w]=0$, having the Fourier series expansion
\[w(x)=\sum_{n=1}^\infty a_n \cos (nx)\,+\,\sum_{n=1}^\infty b_n \sin (nx),\qquad x \in \R,\]
by
\begin{equation}\label{hfs}
\big(\mcc_d(w)\big)(x)=\sum_{n=1}^\infty a_n \coth
 (nd)\sin(nx)-\sum_{n=1}^\infty b_n\coth (nd)\cos (nx), \qquad x \in \R\,.
\end{equation}
Moreover, for any $w \in C_{2\pi,\circ}^{0,\alpha}(\bdr)$, the representation formula \eqref{fc0} holds (see \cite{CSV}).

The fact that any smooth solution of (\ref{g}) solves \eqref{sys} was established in \cite{CSV} by means of Riemann-Hilbert theory. Moreover, in \cite{CSV} the variational interpretation
of \eqref{sys} was provided: these are precisely the Euler-Lagrange equations of the energy functional
$${\mathcal L}(\Omega,\psi)=\iint_\Omega \Big( \frac{|\nabla\psi|^2}{2} -gY +\Upsilon \psi +\frac{Q}{2}\Big)\,dXdY,$$
associated to the flow. Note that \eqref{g1}-\eqref{g2} and Green's first identity yield
$$\iint_\Omega \Upsilon \psi \,dXdY =\iint_\Omega \psi \Delta \psi \,dXdY = - \iint_\Omega |\nabla\psi|^2\,dXdY - mL\, {\frak U}_0\,,$$
where
$${\frak U}_0=\frac{1}{L} \int_0^L \psi(X,0)\,dX$$
is the velocity of the underlying current along the flat bed $Y=0$ (see the discussion in Section 5).
Consequently we can alternatively express the action
functional ${\mathcal L}(\Omega,\psi)$ as
$${\mathcal L}(\Omega,\psi)=\iint_\Omega \Big(-gY -\frac{|\nabla\psi|^2}{2}  +\frac{Q}{2}\Big)\,dXdY- mL\, {\frak U}_0\,,$$
in which the first two integral terms are the difference between the average potential and kinetic energies
per unit area, while $Q$ is related to the total head.
		More precisely, dividing both
sides of \eqref{g2} by $2g$, all the terms in
$$\frac{\vert\nabla\psi\vert^{2}}{2g}+Y=\frac{Q}{2g} \quad\hbox{on}\quad {\mathcal S}$$
have the dimension of length, the first being called the velocity head and representing the
elevation needed for the fluid to reach the velocity $\vert\nabla\psi\vert$ during frictionless
free fall, and the second term being the elevation head, so that $Q/(2g)$ stands for the total head
(amount of energy per unit weight).
These considerations illustrate the physical relevance of the variational
principle that underlies the formulation \eqref{sys} of the governing equations \eqref{g}
for free-surface flows with constant vorticity.

We now explain how a
smooth solution of (\ref{g}) can be constructed from a solution of (\ref{sys}) if \eqref{pos}-\eqref{m3} hold. First, the fact that any solution of (\ref{sys}) satisfies also (\ref{add}) was established in \cite{CSV} by means of Riemann-Hilbert theory. Now, let us define
\[\mcs=\left\{\left(\frac{x}{k}+\big(\mcc_{kh}(v-h)\big)(x),\, v(x)\right):x\in\bdr\right\},\]
a non-self-intersecting smooth curve contained in the upper half-plane. Moreover, let us consider in the horizontal strip
\[\mcr_{kh}=\{(x,y) \in \R^2:\ -kh < y < 0\}\]
the solution $V$ of
\begin{subequations}\label{VV}
\begin{align}
\Delta V &=0\qquad\hbox{in } \mcr_{kh},\\
V(x,0)&=v(x),\qquad x \in \R,\\
V(x,-kh)&=0,\qquad x \in \R,
\end{align}
\end{subequations}
and let $U$ be a harmonic conjugate of $-V$ in $\mcr_{kh}$, so that $U+iV$ is a holomorphic function there. Then
\begin{equation}\label{v}
\begin{aligned}
U(x+2\pi,y)&=U(x,y)+\displaystyle\frac{2\pi}{k}\qquad\text{for all } (x,y) \in \mcr_{kh},\\
V(x+2\pi,y)&=V(x,y)\qquad\text{for all } (x,y) \in \mcr_{kh}\,,
\end{aligned}
\end{equation}
with the holomorphic function $U+iV$ unique up to an additive real constant. Requiring that $x \mapsto U(x,0)+iV(x,0)$ has
zero mean over one period means that we set
\begin{equation}\label{U}
U(x,0)=\frac{x}{k}+ \big(\mcc_{kh}(v-h)\big)(x),\qquad x \in \bdr.
\end{equation}
It follows that $U+iV$ is a conformal mapping from $\mcr_{kh}$ onto a horizontally periodic domain $\Om$
of period $L=2\pi/k$ and conformal mean depth $h$,
whose upper boundary is $\mcs$ and lower boundary is $\mcb$.
The mapping admits a smooth extension as a homeomorphism
between the closures of these domains, with $\{(x,0):x\in\bdr\}$ being mapped onto $\mcs$
and $\{(x,-kh):x\in\bdr\}$ being mapped onto $\mcb$.
 We recall (see \cite{CV}) that, for a given period $L=2\pi/k$, the conformal mean depth is the unique
positive number $h$ such that there exists a conformal map $U+iV$ from $\mcr_{kh}$ onto $\Omega$, subject
to the periodicity conditions \eqref{v}.

Let us now consider, in the domain $\Omega$ so defined, the unique solution $\psi$ of (\ref{g1})--(\ref{g4}). At the same time, let us introduce the function $\zeta:\mcr_{kh}\to \R$
as the unique periodic solution of the problem
\begin{subequations}\label{gc}
\begin{align}
\Delta\zeta&=0\qquad\hbox{in }\mcr_{kh},\label{gc1}\\
\zeta(x,0)&=m\,-\,\displaystyle\frac{\Upsilon }{2}\,v^2(x)\qquad\hbox{for all } x \in \R,\label{gc3}\\
\zeta(x,-kh)&=0\qquad\hbox{for all } x \in \R.\label{gc4}
\end{align}
\end{subequations}
It is straightforward to check that \eqref{zeta} holds. Moreover, the definitions of $V$ and $\zeta$ and the properties of the Dirichlet--Neumann operator in the strip $\mcr_{kh}$ ensure that, on $y=0$, we have
\be
\begin{aligned}\label{vzc}
V_y &= \displaystyle\frac{1}{k}+\mcc_{kh}(v'),\\
\zeta_y &= \displaystyle\frac{m}{kh}\,-\, \displaystyle\frac{\Upsilon }{2kh}\,[v^2]\,-\,\Upsilon \,\mcc_{kh}(vv').
\end{aligned}
\ee
It is easily seen that (\ref{add}) is merely an equivalent way to express the equality
\begin{equation}
(\zeta_y\,+\,\Upsilon  VV_y)^2=(Q-2gV)(V_x^2+V_y^2)\qquad \hbox{at }  (x,0),\hbox{ for all }  x \in \R,\label{gc2}
\end{equation}
an equality which, on the other hand, is is obtained by simply writing \eqref{g2} in the $(x,y)$-variables. Thus, if $\psi$ is constructed as above from a solution of (\ref{sys}), and hence of (\ref{add}), then is also satisfies (\ref{g2}), as required.

\section{Acknowledgements}
The work of the third author
was supported by a grant of the Romanian Ministery of Research and Innovation, CNCS -
UEFISCDI, project number PN-III-P1-1.1-TE-2016-2314, within PNCDI III.


\end{document}